\documentclass[preprint,authoryear]{imsart} 

\RequirePackage{amsthm,amsmath,amsfonts,amssymb}
\RequirePackage[colorlinks,citecolor=blue,urlcolor=blue]{hyperref}
\RequirePackage{graphicx}

\RequirePackage[authoryear]{natbib}

\startlocaldefs
\usepackage{amsthm}

\usepackage{comment}
\usepackage{enumitem}
\usepackage{mathtools}
\usepackage{xcolor}
\usepackage{epsfig}

\startlocaldefs
\numberwithin{equation}{section}
\newtheorem{lemma}{Lemma}[section]

\newtheorem{theorem}{Theorem}[section]

\newtheorem{assumption}{Assumption}

\newtheorem{remark}{Remark}[section]
\newtheorem{example}{Example}[section]

\newcommand{\black}{\color{black}}

\endlocaldefs

\begin{document}

\begin{frontmatter}
\title{A Second-Order Extension\\ of  H\'{a}jek's  Convolution Theorem \\ with  Statistical Applications}
\runtitle{Second-Order Convolution Theorem}

\begin{aug}
\author[A]{\fnms{Junichi}~\snm{Hirukawa}\ead[label=e1]{hirukawa@nanzan-u.ac.j}}
\author[B]{\fnms{Masanobu}~\snm{Taniguchi,}\ead[label=e2]{taniguchi@waseda.jp}}
 and 
\author[C]{\fnms{Marc}~\snm{Hallin}\ead[label=e3]{mhallin@ulb.ac.be}}

\address[A]{Nanzan University, Nagoya, Japan\printead[presep={,\ }]{e1}}
\address[B]{Waseda University, Tokyo, Japan\printead[presep={,\ }]{e2}}
\address[C]{Universit\' e libre de Bruxelles, Brussels, Belgium \\ and \\ Institute of Information Theory and Automation, Czech Academy of Sciences, Prague, Czech Republic\\ \printead[presep={}]{e3}}

\end{aug}

\begin{abstract}
For a class of regular estimators, H\'{a}jek, in his celebrated  ``Convolution Theorem,'' showed that the asymptotic distribution of a regular estimator is the convolution of the distribution of an efficient estimator and some residual distribution. This result constitutes the foundation of the concept of asymptotic efficiency of regular estimators. In this paper, we provide a second-order version of that classical result. Introducing a class of second-order regular estimators with a valid Edgeworth expansion, we derive their asymptotic distribution under contiguous alternatives and show that it is the convolution of the second-order efficient distribution and some second-order residual distribution. This constitutes a second-order extension of H\'{a}jek's convolution theorem.  Based on this, we introduce a concept of {\it second-order robustness} for second-order regular estimators. For a class of general Bayes estimators and minimum contrast estimators in time series models, this second-order robustness is used (i) in the characterization of second-order robust priors, (ii) in a comparison between the second-order robustness of maximum likelihood and Whittle estimators. 
\end{abstract}

\begin{keyword}
\kwd{Higher-order asymptotics}
\kwd{Convolution Theorem}
\kwd{Second-order robustness}
\kwd{Bayes estimator}
\kwd{Minimum contrast estimator}
\kwd{ARMA models}
\end{keyword}

\end{frontmatter}

\section{Introduction}

{The foundational concept of Local Asymptotic Normality (LAN) and the basic results in the so-called {\it asymptotic theory 
  of statistical experiments}  originate in a series of papers by Lucien Le Cam, starting with  \cite{LeCam1960} and  culminating in the introduction of the convergence of experiments \citep{LeCam1972}. See \cite{LeCam1986}, \cite{LeCam1990} or \cite{vdVaart1998} for  definitive definitions and related properties. 
 A fundamental insight gleaned from this theory  
 is that asymptotics crucially depend on the approximation of local log-likelihood ratio processes. In particular,   LAN families are not limited to i.i.d.\ or a few simple models, but also include many dependent and time series models. 
 
Introducing a class of regular estimators under LAN conditions, \cite{Hajek1970} showed that the asymptotic distribution of a regular estimator under {\it local}  perturbations of the model parameters is the 
  convolution of the  distribution of an asymptotically efficient estimator and some residual distribution.  This result and its extension by \cite{LeCam1972} is referred to as the {\it H{\' a}jek} or  {\it H{\' a}jek-Le Cam convolution Theorem}.   
   A simple proof 
   was provided by \cite{Bickel} and can be found in \cite{Roussas1972}.  \cite{Jeganathan1982} extended the convolution theorem to the locally asymptotically {\it mixed} normal (LAMN) case and obtained the asymptotic lower bound for the risk functions of estimators (see also \cite{Jeganathan1995}).

Establishing the asymptotic distributions of test statistics under local alternatives is essential for the study of their power properties and optimality. It is well-known, however, that distinct estimators and test statistics share the same limiting behavior. Therefore, it is desirable to discriminate between them through a more detailed study of their asymptotic behavior. This gave rise to the theory commonly referred to as  higher-order asymptotics, leading to higher-order comparisons,  concepts of higher-order asymptotic efficiency,  and estimators and tests with  increased accuracy or enhanced performance.

Following \cite{Akahira1981}, higher-order asymptotic efficiency is generally defined  in terms of  probability concentration   and Edgeworth expansions.  
 The so-called class of  {\it $k$th-order asymptotically median-unbiased estimators}  ($k$th order AMU for short) was introduced in this context.  In view of the fundamental lemma of Neyman and Pearson, the {\it bound distribution} of $k$th-order AMU estimators\footnote{The {\it  
 $k$th-order bound distribution} is defined as  the distribution of the $k$th-order AMU estimators that yields the highest probability concentration around the true value.}  is derived from the Edgeworth expansion for the likelihood ratio.  For a detailed discussion of these results, see, for example, \cite{Akahira1981}.

In  i.i.d.\ settings, \cite{Fisher1925}, \cite{Rao1962}, \cite{Pfanzagl1978}, and \cite{Akahira1981} introduced higher-order asymptotic efficiency of estimators based on higher-order approximations (Edgeworth expansions) and claimed the superiority of the MLE in their setting. Second-order efficient empirical Bayes confidence intervals were constructed in \cite{Yoshimori2014}. In the-time series context, \cite{Akahira1981} considered the second-order Edgeworth expansion of the distribution of the least squares estimator for AR$\left(1\right)$ model and showed that, under Gaussian assumptions, modifying the MLE  to make it second-order AMU, the modified estimator is  second-order asymptotically efficient (see  \cite{Akahira1981}). The fact that the distributions of autocovariances from appropriate linear processes admit valid Edgeworth expansions is established by \cite{Bose1988a, Bose1988b}, and \cite{Bose1990} allows for bootstrapping the distributions of  least squares estimates in AR$\left(p\right)$ models. The third-order asymptotic theory of tests for a general stochastic process is considered in \cite{Taniguchi1991b}, who shows that the modified Rao test is Bartlett-adjustable. It is known that this type of Bartlett-type adjustment is not unique. \cite{Chandra1991} and \cite{Cordeiro1991} proposed alternative Bartlett-adjusted versions, while \cite{Rao1995} evaluated the third-order powers of the modified tests of \cite{Taniguchi1991b}, \cite{Chandra1991} and \cite{Cordeiro1991}.  \cite{Rao1997} addressed the problem of comparing the higher-order power of tests in their original forms rather than their bias-corrected or Bartlett-type adjusted versions.

A higher-order generalization of Le Cam's third lemma is obtained in \cite{Taniguchi1992} based on second-order Edgeworth approximations. That result provides an explicit form of the second-order Edgeworth expansion of the distribution of test statistics under contiguous alternatives when  their second-order asymptotic distribution   under the null hypothesis is known. However, no  higher-order generalization of the   H{\' a}jek  Theorem  
 exists in the literature. This is the motivation for this paper, where,   introducing an appropriate class of second-order regular estimators, we derive a second-order generalization of this convolution theorem where convolution   is between the second-order efficient distribution and a second-order residual distribution. As an application,   we establish  the second-order asymptotic efficiency of  second-order regular estimators.

The paper is organized as follows. Section \ref{sec:2} introduces the classical H\'{a}jek convolution theorem and some of  its consequences. In Section \ref{sec:3}, denoting by ${\rm P}_{\theta, T}$ the distribution of a stretch of $T$ observations under parameter value $\theta\in\Theta$,  we introduce a class $\mathcal{A}_\theta^{(2)}$ of sequences of second-order {\it regular} estimators $\widehat{\theta}_T$.  Then, we show that the   distributions, under~${\rm P}_{\theta_T, T}$ contiguous to ${\rm P}_{\theta, T}$, of an adequately scaled deviation $\widehat{\theta}_T - \theta$  
  converges (weakly, as $T\to\infty$) to the convolution of the second-order asymptotically efficient distribution and some second-order residual distribution that depends on the choice of  $\widehat{\theta}_T$. 
Section~\ref{sec:4} proposes a quantitative concept {\it second-order robustness}  based on an evaluation of the stability at~$\theta\in\Theta$ of the {\it second-order mean}\footnote{The  {\it second-order mean}    of a regular estimator~$\widehat{\theta}_T$ is  the second-order bias of its second-order stochastic expansion.}   of a regular estimator~$\widehat{\theta}_T$. Then, we compute that robustness for a few examples. \cite{Swe1991} introduced a class of Bayesian estimators with  prior density~$\xi$ for general ARMA$\left(p,q\right)$ models; we provide here the  prior distributions~$\xi$ for which these   estimators achieve second-order robustness.  Based on  their second-order stochastic expansions, we also  evaluate  the second-order robustness of the minimum contrast estimators proposed by \cite{Taniguchi1987}  for the parameters of general ARMA$\left(p,q\right)$ models, allowing for interesting comparisons between the maximum likelihood  and the Whittle estimators.

\section{
 H\'{a}jek's Convolution Theorem}\label{sec:2}
 \subsection{Convolution}

 Let  ${\rm P}_{\theta,T}$,  where the unknown parameter
   $\theta$   ranges over an open subset~$\Theta$   of $\mathbb R$, denote the distribution of the observed stretch 
$$\boldsymbol{X}_T\coloneqq \left(X_1,\ldots,X_T\right)^{\top},\qquad T\in\mathbb N$$  
of some   stochastic process  ${\mathcal X}\coloneqq \left\{X_t\vert\,  t\in\mathbb Z  \right\}$.  Let $\left\{c_T\vert\, \ T\in\mathbb N\right\}$ be a sequence of strictly posi\-tive real numbers such that $c_T\to\infty$ as $T\to\infty$, charac\-terizing the {\it contiguity rate} in the sequence of  families~${{\mathcal P}_T\coloneqq\left\{ {\rm P}_{\theta,T} \vert\,  \theta\in\Theta \right\}}$, $T\in{\mathbb N}$ para\-metrized by $\theta$: namely,  such that the sequences ${\rm P}_{\theta,T+c^{-1}_Th}$ and ${\rm P}_{\theta,T}$ are contiguous as $T\to\infty$ for all $\theta\in\Theta$ and~$h\in~\!\mathbb R$ (typically, $c_T={T^{1/2}}$). Considering  {\it local sequences} of para\-meter values, of the form~$\left\{ \theta_T\coloneqq \theta + c_T^{-1}h\vert\,  T\in\mathbb N \right\}$ with~$h\in{\cal H}\subset \mathbb R$, define the {\it local likelihood} and {\it log-likelihood ratios} 
\begin{align}\label{eq:2-1}L_T\left(\theta,\theta_T\right)\coloneqq \frac{{\rm dP}_{\theta_T,T}}{{\rm dP}_{\theta,T}}
\quad\text{and}\quad 
\Lambda_T\left(\theta,\theta_T\right)\coloneqq \log{L_T\left(\theta,\theta_T\right)}
\end{align}
where ${{\rm dP}_{\theta_T,T}}/{{\rm dP}_{\theta,T}}$ stands for the Radon-Nikodym derivative of the part  of~${{\rm P}_{\theta_T,T}}$~which is absolutely continuous with respect to ${{\rm P}_{\theta ,T}}. 
$

Denote by $\mathcal{L}\left(Y_T\mid {\rm P}_{\theta,T}\right)$ the distribution under   ${\rm P}_{\theta,T}$ of an ${\boldsymbol X}_T$-measurable  random variable $Y_T$, that is, the distribution of $Y_T$ when ${\boldsymbol X}_T\sim {\rm P}_{\theta,T}$,   by~$L^Y_{\theta, T}$ the corresponding distribution function. When\-ever~$\mathcal{L}\left(Y_T\mid {\rm P}_{\theta,T}\right)$ converges weakly to some probability measure~${\mathcal L}_\theta$ (notation: $\mathcal{L}\left(Y_T\mid {\rm P}_{\theta,T}\right)\stackrel{w}{\to} {\mathcal L}_\theta$   as~$T\to\infty$),  we write $Y_T\stackrel{d}{\to} Y$ for any $Y$ such that $Y\sim{\cal  L}_\theta$. 

The notation $f{\ast} g$ is used for the convolution of two integrable functions $f$ and $g$ from~$\mathbb R$ to~$\mathbb R$: namely,  
$$(f{\ast} g) (x)\coloneqq \int_{-\infty}^\infty f(x-t)g(t) {\rm d}t = \int_{-\infty}^\infty f(t)g(x-t) {\rm d}t\quad\text{for all $x\in{\mathbb R}$} .
$$
The convolution $ {\rm P}_1{\ast} {\rm P}_2\eqqcolon {\rm P}$ of two probability distributions ${\rm P}_1$ and ${\rm P}_1$ 
with distribution functions~$F_1$ and $F_2$  
 is the distribution ${\rm P}$, with  distribution function~$F$, 
  of the sum $\xi_1 + \xi_2$ of two independent variables~$\xi_1\sim{\rm P}_1$ and $\xi_2\sim{\rm P}_2$, with
$$ {\rm P}(A) =  {\rm P}_1{\ast} {\rm P}_2(A) = \int_{-\infty}^\infty {\rm 1}_A (x_1 + x_2) {\rm dP}_1{\rm dP}_2
$$
for all Borel sets $A$, hence 
$F=F_1{\ast} F_2
$
and, in case $ {\rm P}_1$ and $ {\rm P}_2$ admit Lebesgue densities $f_1$ and $f_2$ (so that $\rm P$ admits a density $f$),  $f=f_1{\ast} f_2$.

\subsection{The Convolution Theorem}
Consider  
 the class   
\begin{align}\nonumber
 {\mathcal A}^{(1)}_{{\theta}} \coloneqq 
 \Big\{
\{\widehat{\theta}_T\vert\, T\in{\mathbb N} \} \vert\, & {\text{there exists a distribution ${\mathcal L}_{\theta}$ such that, for any $h$}, }
\\
 \, & \hspace{20mm}
\mathcal{L}\big(
c_T(\widehat{\theta}_T-\theta_T) \mid {\rm P}_{\theta_T,T}
\big)
\stackrel{w}{\to}    {\mathcal L}_{\theta}   
 \text{ as $T\to\infty$}\Big\} \label{eq:2-2}
\end{align}
of sequences  $\{\widehat{\theta}_T \vert\,  T\in{\mathbb N}\}$ of estimators of $\theta$, where  ${\mathcal L}_{\theta}$ denotes some distribution that does not depend on $h$. Call $\mathcal{A}^{(1)}\coloneqq \bigcap_{\theta\in\Theta} \mathcal{A}_{{\theta}}^{(1)}$ the class of {\it {first-order } regular sequences of estimators};  this class excludes some weird phenomena as super\-efficiency. With a slight abuse of language, we also say that~$\widehat{\theta}_T$ itself is first-order regular. 

 \begin{assumption}[LAN]\label{LAN}{\rm For all $\theta\in\Theta$ and~$h~\!\in~\!{\cal H}$,
\begin{align}\label{eq:2-3}
\Lambda_T\left(\theta,\theta_T\right)=h\Delta_T\left(\theta\right)-\frac{1}{2}h^2\Gamma\left(\theta\right)+\zeta_{\theta,T}\left(h\right)
\end{align}
for some ${\bf X}_T$-measurable $\Delta_T\left(\theta\right)$ (the {\it central sequence}) and  $\Gamma(\theta)\neq 0$ (the {\it Fisher information for}~$\theta$) independent of $h$,  where, under~${\rm P}_{\theta,T}$, as $T\to\infty$, 
 $\Delta_T\left(\theta\right) $ is asymptoti\-cally~$N\left(0,\Gamma\left(\theta\right)\right)$ and $\zeta_{\theta,T}\left(h\right)$~is~$o_{\rm P}(1)$.
 }\end{assumption}

The following theorem is due to~\cite{Hajek1970}.

\begin{theorem}[H\'{a}jek's Convolution Theorem]\label{ConvThm}

Let  Assumption~\ref{LAN} hold and 
assume that~$\{\widehat{\theta}_T\vert\, T\in{\mathbb N} \}\in{\mathcal A}_{{\theta}}^{(1)}$ is such that  $\mathcal{L}\big(c_T(\widehat{\theta}_T-\theta_T)\mid {\rm P}_{\theta_T,T}\big)\stackrel{w}{\to}   {\mathcal L}_{\theta}$.  Then, ${\mathcal L}_{\theta}$ is the convolution\begin{equation}\label{conv}
{\mathcal L}_{\theta} = N\left(0,\Gamma^{-1}\left(\theta\right)\right) {\ast} {\mathcal M}_\theta .
\end{equation} 
 of a Gaussian $N\!\left(0,\Gamma^{-1}\left(\theta\right)\right)$ distribution with some other distribution~${\cal M}_\theta$. Equi\-valently, denoting by $L_\theta$, $M_\theta$, and $\Phi\left(\cdot\mid\Gamma^{-1}\left(\theta\right)\right)$, respectively,  the distribution functions of   ${\mathcal L}_{\theta}$, ${\cal M}_\theta$,  and   
  the Gaussian~$N\!\left(0,\Gamma^{-1}\left(\theta\right)\right)$,   
     $L_\theta$
  is of the form   
 \begin{align}\label{eq:2-4}
L_{\theta}\left(v\right)=\int \Phi\left(v-u\mid\Gamma^{-1}\left(\theta\right)\right){\rm d}{M}_\theta \left(u\right)\qquad v\in{\mathbb R}.
\end{align}
\end{theorem}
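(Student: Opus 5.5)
The plan is the characteristic-function argument of Bickel, as presented in \cite{Roussas1972}. Write $Z_T\coloneqq c_T(\widehat{\theta}_T-\theta)$ and $W_T\coloneqq \Gamma^{-1}(\theta)\Delta_T(\theta)$. Under ${\rm P}_{\theta,T}$ the pair $(Z_T,\Delta_T(\theta))$ is tight: $Z_T$ converges by regularity with $h=0$, and $\Delta_T(\theta)$ converges by Assumption~\ref{LAN}. By Prohorov's theorem, along a subsequence
\[
(Z_T,\Delta_T(\theta))\stackrel{d}{\to}(Z,\Delta)\quad\text{under } {\rm P}_{\theta,T},
\]
where $Z\sim{\mathcal L}_\theta$ and $\Delta\sim N(0,\Gamma(\theta))$. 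Assumption~\ref{LAN} gives $\Lambda_T(\theta,\theta_T)\stackrel{d}{\to} h\Delta-\tfrac12h^2\Gamma(\theta)$. Since ${\rm E}\exp\{h\Delta-\tfrac12 h^2\Gamma(\theta)\}=1$, Le Cam's first lemma gives contiguity, and Le Cam's third lemma then identifies the law of $(Z_T,\Delta_T)$ under ${\rm P}_{\theta_T,T}$. It is the limit law tilted by $\exp\{h\Delta-\tfrac12h^2\Gamma\}$. In particular, $c_T(\widehat{\theta}_T-\theta_T)=Z_T-h$ has limiting characteristic function
\[
{\rm E}\big[e^{it(Z-h)}e^{h\Delta-\frac12h^2\Gamma}\big].
\]

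Regularity says this equals $\phi(t)\coloneqq{\rm E}\,e^{itZ}$, the characteristic function of ${\mathcal L}_\theta$, for every real $h$. So the first key identity is
\[
{\rm E}\big[e^{itZ}e^{h\Delta}\big]=\phi(t)\,e^{ith+\frac12h^2\Gamma},\qquad h\in\mathbb R .
\]
Both sides are entire in $h$. The left side is entire because $|e^{itZ}e^{h\Delta}|\le e^{|h||\Delta|}$ and $\Delta$ is Gaussian, so dominated convergence applies. I will therefore extend the identity to complex $h$ by analytic continuation. Taking $h=is$ with $s$ real gives
\[
{\rm E}\big[e^{itZ+is\Delta}\big]=\phi(t)\,e^{-ts-\frac12s^2\Gamma}.
\]
This is the joint characteristic function of $(Z,\Delta)$.

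Now set $s=-t\Gamma^{-1}$ and work with $U\coloneqq Z-\Gamma^{-1}\Delta$ and $W\coloneqq\Gamma^{-1}\Delta$. The joint characteristic function of $(U,W)$ factorizes:
\[
{\rm E}\,e^{iuU+iwW}=\phi(u)\,e^{u^2/(2\Gamma)}\cdot e^{-w^2/(2\Gamma)}.
\]
Hence $U$ and $W$ are independent, $W\sim N(0,\Gamma^{-1}(\theta))$, and $U$ has characteristic function $\phi(u)e^{u^2/(2\Gamma)}$. Let ${\mathcal M}_\theta$ be the law of $U$; it does not depend on the chosen subsequence because its characteristic function is determined by $\phi$. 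Then $Z=W+U$ with $W$ and $U$ independent, which is exactly \eqref{conv}. Conditioning on $U$ yields the distribution-function form \eqref{eq:2-4}.

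The main obstacle is justifying the analytic continuation in $h$. I need ${\rm E}[e^{itZ}e^{h\Delta}]$ to be analytic, which requires $\Delta$ to have exponential moments in the limit; Gaussianity of $\Delta$ supplies this. I also need the third-lemma identity to hold for all real $h$, which requires ${\mathcal H}=\mathbb R$, or at least that ${\mathcal H}$ contain an interval so that the identity theorem applies. If analyticity is to be avoided, the fallback is the standard direct computation: evaluate ${\rm E}[e^{itZ}e^{h\Delta}]$ against Gaussian weights in $h$, i.e.\ integrate the identity over $h$ with respect to a $N(0,\lambda^{-1})$ prior and let $\lambda\to0$. This yields the same factorization.
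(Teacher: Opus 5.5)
Your argument is correct and is essentially the Bickel characteristic-function proof (as presented in \cite{Roussas1972}), which the paper cites for this classical result instead of reproving it; the paper reuses your key step, the analytic substitution $h\mapsto -it\,\Gamma^{-1}(\theta)$ (your $h=is$ with $s=-t\,\Gamma^{-1}(\theta)$), in its proof of Theorem~\ref{thm:2}. Your Prohorov/third-lemma construction of the joint limit, together with your caveat that the identity must hold on a set of $h$ with an accumulation point, is exactly what makes that substitution rigorous.
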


In other words,   $c_T(\widehat{\theta}_T-\theta_T)$ converges in distribution, as $T\to\infty$,  to the sum  of a Gaussian~$N\!\left(0,\Gamma^{-1}\left(\theta\right)\right)$ variable and some independent random variable with distribution~${\cal M}_\theta$. As a corollary of this theorem, it has been shown (see, e.g., \cite{Jeganathan1995} or \cite{Taniguchi2000}) that, for a  loss function~$\ell((\widehat{\theta} , \theta )$ of the form~$l(c_T((\widehat{\theta} - \theta))$, letting~$\Delta(\theta)\sim N(0,\Gamma(\theta))$, 
 \begin{enumerate}
\item[(i)] 
$\displaystyle{\liminf_{T\to\infty}{\rm E}_{\theta,T}\left[
l\big(
c_T(\widehat{\theta}_T-\theta)\big)\mid {\rm P}_{\theta,T}
\right]
\ge {\rm E}\left[l\left(\Gamma^{-1}(\theta)\Delta(\theta)\right)\right]
}$\vspace{1mm}
(where ${\rm E}_{\theta,T}$  de\-no\-tes expectation   under ${\rm P}_{\theta,T}$);\medskip 
\item[(ii)]  equality in (i) holds  iff
\end{enumerate}
\begin{align}\label{eq:2-6}
c_T(\widehat{\theta}_T-\theta )-\Gamma^{-1}\left(\theta\right)\Delta_T\left(\theta\right)=o_{\rm P}\left(1\right)\quad\mbox{under ${\rm P}_{\theta,T}$, as $T\to\infty$}. 
\end{align}
It follows that any $\widehat{\theta}_T$ satisfying (\ref{eq:2-6}) is asymptotically normal and {\it first-order asymptotically efficient}.

\black 

\section{A second-order 
Convolution Theorem}\label{sec:3}

In this section, we develop the second-order asymptotic theory for H\'{a}jek's Convolution Theorem~\ref{ConvThm}. 

\subsection{A third-order approximation of log-likelihood ratios}

Second-order asymptotic results are based on a third-order refinement  
 of the second-order approximation~\eqref{approxMasa}. That refinement was  obtained  by  
 \cite{Taniguchi1992}   
(see Lemma \ref{lem:Taniguchi1992} for details), and 
 requires additional  regu\-larity conditions, which we now provide. Assumptions~\ref{assum:B1-2} and~\ref{assum:B3}  incorporate 
   Assumptions~B1--B2 and   
Assumption B3, respectively, in \cite{Taniguchi1992}.

\begin{assumption} 
\label{assum:B1-2}{\rm 
Denoting by $p_{\theta,T} 
$   the probability density function of $\boldsymbol{X}_T$ under~${\rm P}_{\theta ,T}$ and writing ${\rm E}_{\theta, T}$ for the corresponding expectation,  
\begin{enumerate}
  \item[(i)]  $\theta\mapsto p_{\theta,T}\left(\boldsymbol{X}_T\right)$ is ${\rm P}_{\theta,T}$-a.s.\  four times continuously differentiable for\linebreak all~$\theta\in~\!\Theta$ and $T\in{\mathbb N}$; 
  \item[(ii)]  partial derivatives ${\partial}/{\partial \theta}$ and   expectation ${\rm E}_{\theta,T}$    
   commute for  all~$\theta\in\Theta$ \linebreak and~$T\in~\!{\mathbb N}$; 
  \item[(iii)] the   cumulants under ${\rm P}_{\theta,T}$ of
\begin{align*}
Z_i\left(\theta,T\right)\coloneqq c_T^{-1}\left[\frac{\partial^{i}}{\partial \theta^{i}}\log{p_{\theta,T}\left(\boldsymbol{X}_T\right)}-{\rm E}_{\theta ,T}\Big[\frac{\partial^{i}}{\partial \theta^{i}}\log{p_{\theta,T}\left(\boldsymbol{X}_T\right)}\Big]\right] 
\end{align*}
admit, for all $\theta\in\Theta$, asymptotic expansions of the form
\begin{align*}
&{\rm cum}_{\theta,T}\left\{Z_i\left(\theta,T\right),Z_j\left(\theta,T\right)\right\}=\kappa^{\left(1\right)}_{ij}\left(\theta\right)+O\left(c_T^{-2}\right),\\
&{\rm cum}_{\theta,T}\left\{Z_i\left(\theta,T\right),Z_j\left(\theta,T\right),Z_k\left(\theta,T\right)\right\}=c_T^{-1}\kappa^{\left(1\right)}_{ijk}\left(\theta\right)+O\left(c_T^{-2}\right)
\end{align*}
for $i,j,k=1,2,3$, and 
\begin{align*}
{\rm cum}_{\theta,T}\left\{Z_{i_1}\left(\theta,T\right),\ldots,Z_{i_J}\left(\theta,T\right)\right\}=O\left(c_T^{-J+2}\right) 
\end{align*}
for $i_1,\ldots,i_J\in\left\{1,2,3\right\}$ and $J\ge 4$, as $T\to\infty$;
  \item[(iv)] for all $\theta\in\Theta$ and $T\in{\mathbb N}$, $\displaystyle{{\rm E}_{\theta ,T}\left\{Z_3^2\left(\theta,T\right)\right\}<\infty}$ and, as $T\to\infty$,  
\begin{align*}
&c_T^{-2}{\rm E}_{\theta ,T}\left\{\frac{\partial^{2}}{\partial \theta^{2}}\log{p_{\theta,T}\left(\boldsymbol{X}_T\right)}\right\}=-\kappa^{\left(1\right)}_{11}\left(\theta\right)+O\left(c_T^{-2}\right),\\
&c_T^{-2}{\rm E}_{\theta ,T}\left\{\frac{\partial^{3}}{\partial \theta^{3}}\log{p_{\theta,T}\left(\boldsymbol{X}_T\right)}\right\}=-3\kappa^{\left(1\right)}_{12}\left(\theta\right)-\kappa^{\left(1\right)}_{111}\left(\theta\right)+O\left(c_T^{-2}\right).
\end{align*}
\end{enumerate}
}
\end{assumption}

\begin{assumption}
\label{assum:B3}
\rm{There exists $\delta>0$ such that
\begin{align*}
{\rm E}_{\theta ,T}\left[\sup_{\eta\in[\theta\pm\delta]}\left\{c_T^{-2}\frac{\partial^4}{\partial\eta^4}\log{p_{\eta,T}\left(\boldsymbol{X}_T\right)}\right\}^2\right]<\infty
\end{align*}
for every $\theta\in\Theta$ and $T\in{\mathbb N}$.
}
\end{assumption}
Then, we obtain for  $\Lambda_T\left(\theta,\theta_T\right)$ the following third-order approximation result.\smallskip

\begin{lemma}[Lemma 2 of \cite{Taniguchi1992}]\label{lem:Taniguchi1992}
Under Assumptions  \ref{assum:B1-2} and \ref{assum:B3}, the log-likelihood ratio~\eqref{eq:2-1} admits, as $T\to\infty$ under ${\rm P}_{\theta,T}$, the stochastic expansion 
\begin{align}\label{approxMasa}
\Lambda_T\left(\theta,\theta_T\right)=\lambda_{\theta,T}\left(h\right)+c_T^{-2}\zeta_{\theta,T}
\end{align}
with
\begin{align}\label{eq:a1}
\lambda_{\theta,T}\left(h\right)=h\Delta_T\left(\theta\right)+\frac{h^2}{2c_T}Z_2\left(\theta\right)-\frac{h^2}{2}\Gamma\left(\theta\right)-\frac{h^3}{6c_T}\left\{3J\left(\theta\right)+K\left(\theta\right)\right\}
\end{align}
where 
$$\Delta_T\left(\theta\right)\coloneqq Z_1(\theta,T),\ \Gamma (\theta)\coloneqq \kappa^{(1)}_{11}(\theta),\  J(\theta)\coloneqq  \kappa^{(1)}_{12}(\theta),\  M(\theta)\coloneqq  \kappa^{(1)}_{22}(\theta),$$ 
$$ K(\theta)\coloneqq  \kappa^{(1)}_{111}(\theta),\ 
N(\theta)\coloneqq  \kappa^{(1)}_{112}(\theta),$$  
and $\zeta_{\theta,T}$ satisfies, for some $\eta>0$, 
\begin{align*}
{\rm P}_{\theta,T}\left\{\left|\zeta_T\right|>\rho_Tc_T\right\}=O\left(c_T^{-2+\eta}\right)  
\end{align*}
for any $\theta\in\Theta$ and $\rho_T$ such that $\rho_T\to 0$ and $\rho_Tc_T\to\infty$ as $T\to\infty$.
\end{lemma}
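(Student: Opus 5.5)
The plan is a fourth-order Taylor expansion of $\eta\mapsto\log p_{\eta,T}(\boldsymbol{X}_T)$ around $\theta$, evaluated at $\eta=\theta_T=\theta+c_T^{-1}h$, with Lagrange remainder:
\begin{align*}
\Lambda_T(\theta,\theta_T)=\sum_{i=1}^3\frac{h^i}{i!\,c_T^i}\,\frac{\partial^i}{\partial\theta^i}\log p_{\theta,T}(\boldsymbol{X}_T)+\frac{h^4}{24\,c_T^4}\,\frac{\partial^4}{\partial\eta^4}\log p_{\eta,T}(\boldsymbol{X}_T)\Big|_{\eta=\theta^\ast_T},
\end{align*}
with $\theta^\ast_T$ between $\theta$ and $\theta_T$. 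This is legitimate by Assumption~\ref{assum:B1-2}(i). Each derivative is then decomposed as
\[
c_T Z_i(\theta,T)+{\rm E}_{\theta,T}\Bigl[\frac{\partial^i}{\partial\theta^i}\log p_{\theta,T}\Bigr].
\]

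\textbf{First and second derivatives.} By (ii), the score has mean zero, so the first term is $hZ_1=h\Delta_T(\theta)$. The second term gives
\[
\frac{h^2}{2c_T}Z_2+\frac{h^2}{2}\,c_T^{-2}{\rm E}_{\theta,T}\bigl[\partial^2\log p\bigr]=\frac{h^2}{2c_T}Z_2-\frac{h^2}{2}\Gamma(\theta)+O(c_T^{-2}),
\]
using (iv).

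\textbf{Third derivative.} Here I obtain
\[
\frac{h^3}{6c_T^2}Z_3+\frac{h^3}{6c_T}\,c_T^{-2}{\rm E}_{\theta,T}\bigl[\partial^3\log p\bigr].
\]
By (iv), the deterministic part equals $-\frac{h^3}{6c_T}\{3J+K\}+O(c_T^{-3})$. This leaves the centered piece $\frac{h^3}{6c_T^2}Z_3$.

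\textbf{Collecting the remainder.} Everything not in $\lambda_{\theta,T}(h)$ is gathered into $c_T^{-2}\zeta_{\theta,T}$:
\[
\zeta_{\theta,T}=\frac{h^3}{6}Z_3+\frac{h^4}{24}\,c_T^{-2}\partial^4\log p_{\theta^\ast_T,T}+O(1).
\]
It then suffices to show ${\rm P}_{\theta,T}\{|\zeta_{\theta,T}|>\rho_T c_T\}=O(c_T^{-2+\eta})$.

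For the $Z_3$ term I will use Chebyshev. By (iii), ${\rm Var}(Z_3)=\kappa^{(1)}_{33}+O(c_T^{-2})$ is bounded, so the probability is $O((\rho_Tc_T)^{-2})$. To turn this into $O(c_T^{-2+\eta})$ for arbitrary admissible $\rho_T$, I would use higher cumulants. The $J$th cumulant is $O(c_T^{-J+2})$, so the moments of $Z_3$ of every order are bounded, and Markov with a high moment gives $O((\rho_Tc_T)^{-m})$.

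The fourth-derivative term is handled similarly. For large $T$, $\theta^\ast_T\in[\theta\pm\delta]$, so I bound it by the supremum in Assumption~\ref{assum:B3} and apply Chebyshev with that second moment. This gives $O((\rho_Tc_T)^{-2})$.

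\textbf{Main obstacle.} The hard step is reconciling these tail bounds with the claimed rate $O(c_T^{-2+\eta})$ uniformly over all $\rho_T$ with $\rho_T\to0$ and $\rho_Tc_T\to\infty$. When $\rho_Tc_T$ grows slowly, a second-moment Chebyshev bound alone is too weak. For the $Z_3$ part the remedy is high-order moments from the cumulant condition (iii). For the fourth-derivative part only second moments are assumed, so I expect to follow Taniguchi's original argument. That argument either interprets $\eta$ as absorbing the slack from $\rho_T$ (choosing $\rho_T\ge c_T^{-\eta/2}$), or reuses the expansion once more so that the fourth-order term carries an extra factor $c_T^{-1}$. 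In the latter case the term is $O_{\rm P}(c_T^{-1})$ inside $\zeta$, and the bound follows.
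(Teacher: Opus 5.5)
Your fourth-order Taylor expansion with Lagrange remainder is the natural route. The paper gives no proof of its own and only cites Taniguchi (1992). Your bookkeeping is correct:
\begin{itemize}
\item Assumption~\ref{assum:B1-2}(ii) makes the score centred.
\item Assumption~\ref{assum:B1-2}(iv) turns the two deterministic pieces into $-\tfrac{h^2}{2}\Gamma(\theta)+O(c_T^{-2})$ and $-\tfrac{h^3}{6c_T}\{3J(\theta)+K(\theta)\}+O(c_T^{-3})$.
\item This leaves $\zeta_{\theta,T}=\tfrac{h^3}{6}Z_3(\theta,T)+\tfrac{h^4}{24}c_T^{-2}\partial^4\log p_{\theta^\ast_T,T}(\boldsymbol{X}_T)+O(1)$, where the $O(1)$ is deterministic and eventually below $\rho_Tc_T/3$.
\end{itemize}
The gap is in the tail step.

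\textbf{High moments do not give the rate.} Your claim that high moments of $Z_3$ deliver $O(c_T^{-2+\eta})$ for every admissible $\rho_T$ is false. Markov gives $O((\rho_Tc_T)^{-m})$. If $\rho_Tc_T$ grows like $\log c_T$, then $(\log c_T)^{-m}$ is not $O(c_T^{-2+\eta})$ for any fixed $m$ and any $\eta<2$. No moment argument can repair this. By (iii), $Z_3$ is in general asymptotically $N(0,\kappa^{(1)}_{33})$ with $\kappa^{(1)}_{33}>0$, so ${\rm P}_{\theta,T}\{|Z_3|>x_T\}$ decays more slowly than any power of $c_T$ when $x_T\to\infty$ slowly enough.

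\textbf{Alternative (b) is unavailable.} The Lagrange term is genuinely of order $c_T^{-2}$ in $\Lambda_T$, i.e.\ $O_{\rm P}(1)$ in $\zeta_{\theta,T}$. It carries no extra factor $c_T^{-1}$. Expanding one order further would need a fifth derivative and cumulant control of $Z_4$, and neither is assumed.

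\textbf{What is actually provable.} The quantifier ``for any $\rho_T$'' cannot be read literally with a fixed $\eta<2$, and for $\eta\ge 2$ the bound is vacuous. The provable content is your option (a), and it needs only second moments, not higher cumulants. Chebyshev applied to each of the two random terms gives ${\rm P}_{\theta,T}\{|\zeta_{\theta,T}|>\rho_Tc_T\}\le C(\rho_Tc_T)^{-2}$. This is $O(c_T^{-2+\eta})$ as soon as $\rho_T\ge c_T^{-\eta/2}$. It is $o(c_T^{-1})$ whenever $\rho_Tc_T^{1/2}\to\infty$, which is the form such remainders take when second-order Edgeworth expansions are transferred from $\lambda_{\theta,T}$ to $\Lambda_T$.

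\textbf{Uniformity in Assumption~\ref{assum:B3}.} For the fourth-derivative term, the Chebyshev constant must not depend on $T$. You therefore need Assumption~\ref{assum:B3} in the uniform form $\sup_T{\rm E}_{\theta,T}\bigl[\sup_{|\vartheta-\theta|\le\delta}\{c_T^{-2}\partial^4\log p_{\vartheta,T}(\boldsymbol{X}_T)/\partial\vartheta^4\}^2\bigr]<\infty$. As literally stated (finite for each $T$), it does not give this, so say explicitly that you read it uniformly.
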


\subsection{Second-order regularity}

Second-order results also require reinforcing the regula\-rity assumptions on the estimators ${\widehat{\theta}}_T$. \cite{Taniguchi1983} (see also   \cite{Taniguchi2000}) therefore introduces the  concept 
 of   {\it asymptotically orthogonal variables}.  

Let Assumption~\ref{LAN} hold. Define 
\begin{equation}\label{W1}
W_{1,T}\left(\theta\right)\coloneqq \Gamma^{-1/2}\left(\theta\right)\Delta_T\left(\theta\right)\
\end{equation}
and
\begin{equation}\label{W2}
W_{2,T}\left(\theta\right)\coloneqq \sigma_2^{-1}\left\{Z_2\left(\theta,T\right)-J\left(\theta\right)\Gamma^{-1}\left(\theta\right)\Delta_T\left(\theta\right)\right\}
\end{equation}
with
 $
\sigma_2^2\coloneqq M\left(\theta\right)-J^2\left(\theta\right)\Gamma^{-1}\left(\theta\right)$.
Then, the variables $W_{1,T}\left(\theta\right)$ and $W_{2,T}\left(\theta\right)$ are called {\em {asympto\-tically} orthogonal} if they satisfy, as {$T\to\infty$,}
\begin{align*}
&{\rm cum}_{\theta}\left(W_{1,T}\left(\theta\right),W_{1,T}\left(\theta\right)\right)=1+O\left(c_T^{-2}\right),\\
&{\rm cum}_{\theta}\left(W_{2,T}\left(\theta\right),W_{2,T}\left(\theta\right)\right)=1+O\left(c_T^{-2}\right),
\end{align*}
and
\begin{align*}
&{\,}\hspace{-5.9mm}{\rm cum}_{\theta}\left(W_{1,T}\left(\theta\right),W_{2,T}\left(\theta\right)\right)=O\left(c_T^{-2}\right).
\end{align*}

We can  
rewrite \eqref{approxMasa} as
\begin{align}\nonumber
\lambda_{\theta,T}\left(h\right)=h\Gamma^{1/2}\!\left(\theta\right)W_{1,T}\!\left(\theta\right)&+\frac{h^2}{2c_T}\left\{J\left(\theta\right)\Gamma^{-1/2}\!\left(\theta\right)W_{1,T}\!\left(\theta\right)+\sigma_2W_{2,T}\!\left(\theta\right)\right\}\\
&-\frac{h^2}{2}\Gamma\!\left(\theta\right)-\frac{h^3}{6c_T}\left\{3J\!\left(\theta\right)+K\!\left(\theta\right)\right\}.
\label{eq:a2-2}
\end{align}

Denoting by $\phi$ the standard normal density and by~$H_j$ the $j$th Hermite polynomial, let
$${\rm Edg}^{\left(2\right)}_\theta\!\coloneqq \Phi_{1,\theta} +c_T^{-1} \phi_{2,\theta}$$ where $\Phi_{1,\theta}$  stands for some Gaussian distribution function and~{$\phi_{2,\theta}$ is some {\it Hermite  density} of the form\footnote{Note that ${\rm Edg}^{\left(2\right)}_\theta $ is not necessarily a distribution function and even can take negative values. }
$$ \phi_{2,\theta}: x\mapsto  \phi_{2,\theta}(x)\coloneqq \phi(x) [c_{1,\theta}H_1(x) + c_{2,\theta} H_3(x)]$$ for some  $c_{1,\theta}$  and $c_{2,\theta}$  depending on $\theta$.} Denote by~$L^S_{ \theta_T,T}$ the distribution function, under~${\rm P}_{\theta_T, T}$ (with~$\theta_T=\theta+c_T^{-1}h$), of $S_{\theta_T,T}\!\coloneqq c_T(\widehat{\theta}_T\!-~\!\theta_T )$. 
We say that~${\rm Edg}^{\left(2\right)}_\theta$ constitutes a {\it valid} second-order Edgeworth approximation  for~$L^S_{{\theta}_T,T}$~iff %
\begin{align}\label{eq:3-6}
\sup_{x\in {\mathbb R}}\left\vert L^S_{{\theta}_T,T}(x) - {\rm Edg}^{(2)}_{{\theta_T}} (x)\right\vert = o(c_T^{-1})\quad\text{as $T\to\infty$}.%
\end{align}

 Next consider the class  
\begin{align}\label{eq:3-9}
\mathcal{A}_\theta^{(2)}\coloneqq
\Big\{
\{\widehat{\theta}_T\vert\, T\in{\mathbb N}\} 
\Big\vert\, &\text{there exists $\Phi_{1,\theta}$ and $\phi_{2,\theta}$}  \text{ such that, for all $h$, ${\rm Edg}^{\left(2\right)}_\theta$  is}\nonumber \\ 
&\hspace{10mm}\text{  a valid second-order Edgeworth approximation  for~$L^S_{\theta_T,T}$}
\Big\} 
\end{align}
of sequences  $\{\widehat{\theta}_T \vert\, T\in{\mathbb N}\}$ of estimators of $\theta$ for which $L^S_{\theta_T,T}$  admits a valid second-order Edgeworth approximation of the form ${\rm Edg}^{(2)}_\theta$ that does not depend  on $h$:  $\mathcal{A}_\theta^{(2)}$, thus, is a second-order restriction of ${\mathcal{A}}_\theta^{(1)}$. 
Call $$\mathcal{A}^{(2)}
\coloneqq
\bigcap_{\theta\in\Theta}
{\mathcal A}_\theta^{(2)}\subset {\mathcal{A}}_\theta^{(1)}$$ 
 the class of {\it second-order regular sequences of estimators}  of $\theta$.  With a slight abuse of language, we also say that $\widehat{\theta}_T$ itself is second-order regular.\smallskip

Our second-order  H\'{a}jek convolution result holds for sequences of estimators  in the intersection of~$\mathcal{A}^{(2)}$ with the class $\mathcal{D}\coloneqq \bigcap_{\theta\in\Theta}\mathcal{D}_\theta$, where 
\begin{align}\label{eq:3-10}
\mathcal{D}_\theta
\coloneqq\Big\{
\{\widehat{\theta}_T\vert\, T\in{\mathbb N}\} 
\Big\vert\, 
& S_{\theta,T}=a_1W_{1,T}(\theta)+c_T^{-1}Q_{{\theta,}T}+o_{\rm P}\left(c_T^{-1}\right) \nonumber
\\[-1mm]
 &\hspace{5mm}\text{for some $a_1$ and some $Q_{{\theta,}T}$ under ${\rm P}_{\theta, T}$, as $T\to\infty$ }
\Big\}\end{align}
 with   $S_{\theta,T}\coloneqq c_T\left(\widehat{\theta}_T-\theta\right)$, $W_{1,T}\left(\theta\right)$ and $W_{2,T}\left(\theta\right)$ defined in \eqref{W1} and \eqref{W2}, respectively, and
 $$Q_{{\theta,}T}  \coloneqq Q_{{\theta,}T}\left(W_{1,T}\left(\theta\right),W_{2,T}\left(\theta\right)\right)=O_{\rm P}(1)\quad\text{under ${\rm P}_{\theta, T}$, as $T\to\infty$}.$$ 
This class $\mathcal{D}$ is a natural one and comprises, 
 in particular,  the maximum likelihood and  Bayesian estimators of~$\theta$  (see Section~4 below and, e.g.,~\cite{Taniguchi2000}).

\color{black}

\begin{remark}
{\rm 
Since $W_{1,T}\left(\theta\right)=\Gamma^{-1/2}\left(\theta\right)\Delta_T\left(\theta\right)$, letting  $a_1=\Gamma^{-1/2}\left(\theta\right)$ in \eqref{eq:3-10} cha\-racterizes a first-order asymptotically efficient estimator ${\widehat\theta}_T$ which, furthermore,   also  enjoys  first-order asymptotic median unbiasedness (AMU).\footnote{A sequence of estimators $\widehat{\theta}_T$ is called {\it first-order asymptotically median unbiased} (AMU, see~\cite{Akahira1981}) iff~${\rm P}_{\theta, T} [ c_T ( (\widehat{\theta}_T - \theta ) \leq 0 ]  =1/2 +  o(1)$ as $T\to\infty$.}
}
\end{remark}

\subsection{Main result}\label{sec33}
We now can state the main result of this paper. Let $H_j
$, $j\in{\mathbb N}$  denote the $j$th Hermite polynomial. 

\begin{theorem}[A second-order H\'{a}jek Convolution Theorem]\label{thm:2}
Let  Assumptions~1--3   hold. Assume furthermore that~$\{\widehat{\theta}_T\vert\, T\in{\mathbb N}\}\in {\mathcal{A}^{(2)} \cap} \mathcal{D}$ and {let $L^S_{ \theta_T,T}$ be the distribution function under~${\rm P}_{\theta_T, T}$ of~$S_{\theta_T,T}\!\coloneqq c_T(\widehat{\theta}_T\!-~\!\theta_T )$ with $\theta_T=\theta+c_T^{-1}h$.}

Then, $L^S_{ \theta_T,T}$ admits, as $T\to\infty$, the asymptotic expansion
\begin{equation}\label{Lresult}
 L^S_{\theta_T,  \theta_T}=\left[\Phi_{\Gamma(\theta)} \cdot F^{(2)}_{\theta, T}
\right]\ast G_2+O\left(c_T^{-2}\right),
\end{equation}
where $\Phi_{\Gamma(\theta)}$ stands for the $N(0,{\Gamma^{-1}(\theta))}$ distribution function, 
\begin{align*}
 F^{(2)}_{\theta ,T}  
\coloneqq 1+c_T^{-1}\Big\{{\omega^{(1)}_{\theta}}H_1
+\frac{3J\left(\theta\right)+K\left(\theta\right)}{6\Gamma^{3}\left(\theta\right)}H_3
\Big\}
\end{align*}
 with {$\omega^{(1)}_{\theta}\coloneqq {\rm E}_{\theta,T}\left(Q_{\theta,T}\right)+O\left(c_T^{-1}\right)$},  
and  $G_2$ is the distribution function,  under~$ P_{\theta,T}$, of the \em{second-order residual }  
$$
 S_{\theta,T}-\Gamma^{-1/2}\left(\theta\right)W_{1,T}\left(\theta\right)-c_T^{-1}Q_{\theta,T}
$$ 
where $W_{1,T}(\theta)$ and~$Q_{\theta,T}$ are as  in  
 \eqref{eq:3-10}.
\end{theorem}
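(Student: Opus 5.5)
We follow Bickel's characteristic-function proof of Theorem~\ref{ConvThm}, carried to second order. Regularity ($\mathcal A^{(2)}$) lets us obtain the law of $S_{\theta_T,T}$ under ${\rm P}_{\theta_T,T}$ from the joint behaviour of $(S_{\theta,T},\Lambda_T(\theta,\theta_T))$ under ${\rm P}_{\theta,T}$. Because $S_{\theta_T,T}=S_{\theta,T}-h$, we have
\[
{\rm E}_{\theta_T,T}\big[e^{{\rm i}tS_{\theta_T,T}}\big]=e^{-{\rm i}th}\,{\rm E}_{\theta,T}\big[e^{{\rm i}tS_{\theta,T}}\exp\{\Lambda_T(\theta,\theta_T)\}\big]+o(c_T^{-1}).
\]
This is a second-order version of Le Cam's third lemma \citep{Taniguchi1992}. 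By the Edgeworth validity \eqref{eq:3-6}, the left-hand side is the Fourier transform of ${\rm Edg}^{(2)}_\theta$ up to $o(c_T^{-1})$, uniformly on compact $t$-sets, and it does not depend on $h$.

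\textbf{Step 1 (expand the likelihood ratio).} Insert Lemma~\ref{lem:Taniguchi1992} in the form \eqref{eq:a2-2}, together with the $\mathcal D$-representation
\[
S_{\theta,T}=a_1W_{1,T}+c_T^{-1}Q_{\theta,T}+o_{\rm P}(c_T^{-1}).
\]
The remainder $c_T^{-2}\zeta_{\theta,T}$ is discarded on the event $\{|\zeta_{\theta,T}|\le\rho_Tc_T\}$, and the complement has probability $O(c_T^{-2+\eta})$. Expand $\exp\{\lambda_{\theta,T}(h)\}$ to order $c_T^{-1}$ around $\exp\{h\Gamma^{1/2}W_{1,T}-h^2\Gamma/2\}$. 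Then compute the joint cumulants of $(W_{1,T},W_{2,T},Q_{\theta,T})$ up to $O(c_T^{-2})$, using Assumption~\ref{assum:B1-2}(iii) and asymptotic orthogonality. This yields the second-order joint Edgeworth density of $(W_{1,T},W_{2,T})$.

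\textbf{Step 2 (use regularity to pin down $a_1$ and the first-order terms).} Equate the resulting expression to the $h$-free transform. At order $1$ this is the usual argument: the $h$-dependence must cancel, which forces the covariance of $S$ with $\Delta_T$ to be $1$. Hence $a_1\Gamma^{1/2}=1$, i.e.\ $a_1=\Gamma^{-1/2}$. The residual $R_T:=S_{\theta,T}-\Gamma^{-1/2}W_{1,T}-c_T^{-1}Q_{\theta,T}$ is then asymptotically independent of $W_{1,T}$, because its covariance with $W_{1,T}$ vanishes.

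At order $c_T^{-1}$, the $h$-dependent terms are of three kinds: the term $h^2c_T^{-1}\{J\Gamma^{-1/2}W_1+\sigma_2W_2\}/2$, the cubic $-h^3(3J+K)/(6c_T)$, and the cross-moments of $Q_{\theta,T}$ with $W_1$. The $h$-free requirement makes these cancel against the mean ${\rm E}_{\theta,T}Q_{\theta,T}$ and the third cumulant of $\Delta_T$. What survives is a multiplicative factor on the Gaussian part. It combines the $H_1$ coefficient $\omega^{(1)}_\theta={\rm E}Q_{\theta,T}+O(c_T^{-1})$ with the $H_3$ coefficient $(3J+K)/(6\Gamma^3)$, which comes from $\kappa_{111}$ and the $J$-term of $\Lambda_T$ after rescaling by $\Gamma^{-1}$. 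This produces $\Phi_{\Gamma(\theta)}\cdot F^{(2)}_{\theta,T}$.

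\textbf{Step 3 (factorize and invert).} The characteristic function now factors as the transform of $\Phi_{\Gamma}F^{(2)}_{\theta,T}$ times the characteristic function of $R_T$, up to $O(c_T^{-2})$. The factorization holds because the joint cumulants of $R_T$ with $W_{1,T}$ are of order $c_T^{-2}$. Inverting via Esseen's smoothing lemma gives \eqref{Lresult} with $G_2$ the law of $R_T$.

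The main obstacle is Step 2 at order $c_T^{-1}$. It requires a careful bookkeeping of which cross-cumulants between $Q_{\theta,T}$, $W_{1,T}$ and $W_{2,T}$ must vanish for the expansion to be $h$-free, and it must show that the surviving terms have exactly the stated $H_1/H_3$ form. Secondarily, one must control the remainders uniformly to reach $O(c_T^{-2})$ rather than only $o(c_T^{-1})$. For this I would rely on the tail bound in Lemma~\ref{lem:Taniguchi1992} and truncation of the exponential on sets of probability $O(c_T^{-2+\eta})$, accepting that this may only yield $o(c_T^{-1})$ unless $\eta$ is handled carefully.
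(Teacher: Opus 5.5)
The genuine gap is the order-$c_T^{-1}$ half of your Step~2. You assert its outcome rather than derive it, and it is the entire content of the theorem.

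The surrounding steps are fine. Your Step~1 is the paper's Lemma~\ref{lem:3-2}. The first-order half of Step~2 is correct, and cleaner than the paper, which carries a general $a_1$ through the computation. The leading factor $\exp\{(it)^2a_1^2/2+it(a_1\Gamma^{1/2}-1)h\}$ is $h$-free only if $a_1=\Gamma^{-1/2}$. Then $R_T=o_{\rm P}(c_T^{-1})$ by the definition of $\mathcal D$, so $G_2$ is degenerate at this order and Step~3's factorization is not needed. (``Zero covariance, hence asymptotic independence'' would not be a valid inference anyway.)

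With $a_1=\Gamma^{-1/2}$, the bracket of Lemma~\ref{lem:3-2} is
\[
B(t,h)=it\,a_2(\mu)+\frac{it\,h^2J}{2\Gamma}-\frac{h^3K}{6}+\frac{K\mu^3}{6\Gamma^{3/2}},\qquad \mu=it\,\Gamma^{-1/2}+h\,\Gamma^{1/2}.
\]
The missing idea is exactly the step of Bickel's argument that you invoke but never use. Second-order regularity gives $B(t,h)=B(t,0)$ for real $h$. Since $B$ is polynomial (more generally entire) in $h$, the identity continues to the complex point $h=-it\,\Gamma^{-1}(\theta)$, where $\mu=0$. At that point $a_2(0)={\rm E}_{\theta,T}Q_{\theta,T}+o(1)=\omega^{(1)}_\theta$, and the two explicit terms combine to $(it)^3(3J+K)/(6\Gamma^3)$. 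Hence $B(t,0)=it\,\omega^{(1)}_\theta+(it)^3(3J+K)/(6\Gamma^3)$. Its Fourier inversion against $e^{(it)^2/(2\Gamma)}$ is what the paper writes as $\Phi_{\Gamma(\theta)}\cdot F^{(2)}_{\theta,T}$.

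Without this device, your proposed bookkeeping does not produce the result the way you describe. Write $a_2(\mu)=\sum_k\alpha_k\mu^k$, with $\alpha_k\approx{\rm E}[Q_{\theta,T}H_k(W_{1,T})]/k!$.
\begin{itemize}
\item Evaluating directly at $h=0$ gives the cubic coefficient $\alpha_2/\Gamma+K/(6\Gamma^3)$, which contains the undetermined cross-moment $\alpha_2$.
\item Matching powers of $h$ forces $\alpha_1=0$ and $\alpha_k=0$ for $k\ge 3$. For $\alpha_2$ it imposes two conditions: $2\alpha_2+K/(2\Gamma^2)=0$ from the $(it)^2h$ term, and $\alpha_2\Gamma+(J+K)/(2\Gamma)=0$ from the $(it)h^2$ term.
\item These are compatible only if $2J+K=0$. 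Differentiating the first identity in Assumption~\ref{assum:B1-2}(iv), this is formally $\Gamma'(\theta)=0$.
\end{itemize}
So nothing ``cancels against ${\rm E}Q_{\theta,T}$'', which is $h$-free. Only when the two conditions agree do you get $\alpha_2=J/(2\Gamma^2)$, and only then does your route reproduce $(3J+K)/(6\Gamma^3)$. The complex substitution reads the coefficients directly off the regularity hypothesis, without ever solving these constraints.

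Finally, your concern about $O(c_T^{-2})$ versus $o(c_T^{-1})$ is well founded. The stated hypotheses support only $o(c_T^{-1})$: \eqref{eq:3-6}, the $o_{\rm P}(c_T^{-1})$ remainder in $\mathcal D$, and the $O(c_T^{-2+\eta})$ tail bound. The paper simply asserts $O(c_T^{-2})$.
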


Note that the bias-adjusted $F_{2,\theta,T}\coloneqq \Phi_{\Gamma(\theta)}{\cdot} F^{(2)}_{\theta, T}$ in \eqref{Lresult} 
 is asymptotically second-order efficient;  
  therefore,   
  any second-order bias-adjusted $\{\widehat{\theta}_T\vert\, T\in{\mathbb N}\}$ in~${\mathcal{A}^{(2)}\cap \mathcal{D}}$ 
 is asymptotically second-order efficient (see the discussion in  \cite{Taniguchi1991b}, pp.~45-49).  
 
{
The proof of  Theorem \ref{thm:2} relies on  a lemma providing the behavior of the characteristic function of~$S_{\theta_T,T}$ under local alternatives ${\rm P}_{\theta_T,T}$.  

Let  
   $\{\widehat{\theta}_T\vert\, T\in~\!{\mathbb N}\}  
\in~\!\mathcal{D}_\theta$ and recall that~$S_{\theta,T}\coloneqq\!c_T(\widehat{\theta}_T-\theta )$. 
The characteristic function of~$S_{\theta,T}$ under ${\rm P}_{\theta_T,T}$ with $\theta_T =~\!\theta +~\!c_T^{-1}h$~is 
\begin{align*}
\psi_{S_{\theta,T}}\left(t,h\right)&\coloneqq {\rm E}_{\theta+c_T^{-1}h ,T}\left\{\exp\left(it\, S_{\theta,T}\right)\right\}=\int \exp\left\{it\, S\left(\boldsymbol{x}\right)\right\}L_T\left(\boldsymbol{x}\right){\rm dP}_{\theta,T}\left(\boldsymbol{x}\right)\\
&={\rm E}_{\theta ,T}\left\{\exp\left(it\, S_{\theta,T}
+\Lambda_T\right)\right\}.
\end{align*}
Similarly, the characteristic function under ${\rm P}_{\theta_T,T}$ of $$
S_{\theta_T, T}\coloneqq c_T(\widehat{\theta}_T-\theta_T)=c_T(\widehat{\theta}_T-\theta )-h=S_{\theta,T}-h$$
 is
 {
 \begin{align}\label{eq:3-8}
 \psi_{S_{\theta_T, T}}\left(t,h\right)=\exp ({-it\,h})\psi_{S_{\theta,T}}\left(t,h\right).     
 \end{align}
 }

\begin{lemma}\label{lem:3-2}

 Let  Assumptions~1--3   hold. Assume that $\{\widehat{\theta}_T\vert\, T\in{\mathbb N}\}\in \mathcal{D}$. 
The characte\-ristic function under ${\rm P}_{\theta_T,T}$ of $S_{\theta_T, T}$ satisfies 

\begin{align}\nonumber
\psi_{S_{\theta_T, T}}\left(t,h\right)&=\exp{\Big\{\frac{1}{2}{\left(it\right)^2a_1^2}{2}+it\big\{a_1\Gamma^{1/2}\left(\theta\right)-1\big\}h\Big\}}
 \nonumber \\
 & \nonumber  
\qquad\times \left[1+c_T^{-1}\left\{it\,a_2\left(\mu\right)
 +it\,a_1\frac{h^2J\left(\theta\right)\Gamma^{-1/2}\left(\theta\right)}{2}-\frac{h^3K\left(\theta\right)}{6}\right.\right.   \\ 
\qquad &\qquad  \qquad\qquad\qquad\quad  
\left.\left. +\frac{K\left(\theta\right)\Gamma^{-3/2}\left(\theta\right)}{6}\mu^3\right\}\right]+O\left(c_T^{-2}\right)\ \text{as $T\to\infty$,}\label{eq:a4}
\end{align}
where $\mu\!\coloneqq\! it\,a_1+h\Gamma^{1/2}\!\left(\theta\right)$ and 
$ 
a_2\left(\mu\right) \!\coloneqq\! {\rm E}_{{\theta,T}}\left\{Q_{\theta,T}\!\left(W_{1,T}\!\left(\theta\right)+\mu,W_{2,T}\!\left(\theta\right)\right)\right\}+~\!o\left(1\right).  
$
\end{lemma}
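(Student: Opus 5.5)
We compute $\psi_{S_{\theta,T}}(t,h)={\rm E}_{\theta,T}\{\exp(it\,S_{\theta,T}+\Lambda_T)\}$ by substituting the stochastic expansion of $S_{\theta,T}$ from the definition \eqref{eq:3-10} of $\mathcal{D}_\theta$ and the third-order expansion of $\Lambda_T$ from Lemma~\ref{lem:Taniguchi1992}, written in the form \eqref{eq:a2-2}. The conclusion then follows from \eqref{eq:3-8}, i.e.\ after multiplying by $\exp(-ith)$. Since the remainder $c_T^{-2}\zeta_{\theta,T}$ exceeds $\rho_T c_T^{-1}$ only on a set of probability $O(c_T^{-2+\eta})$, we replace $\Lambda_T$ by $\lambda_{\theta,T}(h)$, and $S_{\theta,T}$ by $a_1W_{1,T}+c_T^{-1}Q_{\theta,T}$. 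We then expand to order $c_T^{-1}$:
\begin{align*}
\exp\{it S_{\theta,T}+\lambda_{\theta,T}(h)\}&=\exp\Big\{(it\,a_1+h\Gamma^{1/2})W_{1,T}-\tfrac{h^2}{2}\Gamma\Big\}\\
&\quad\times\Big[1+c_T^{-1}\Big\{it\,Q_{\theta,T}+\tfrac{h^2}{2}\big(J\Gamma^{-1/2}W_{1,T}+\sigma_2W_{2,T}\big)-\tfrac{h^3}{6}(3J+K)\Big\}\Big]+\cdots
\end{align*}

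The key device is an exponential tilting (Esscher) argument with $\mu=it\,a_1+h\Gamma^{1/2}$. The second-order Edgeworth expansion of the joint density of $(W_{1,T},W_{2,T})$ under ${\rm P}_{\theta,T}$ is available from Assumption~\ref{assum:B1-2}(iii) together with asymptotic orthogonality. It gives a standard bivariate normal density times $1+c_T^{-1}\{\text{third-cumulant Hermite terms}\}$. In particular, the $H_3(w_1)$ coefficient is $\kappa_{111}\Gamma^{-3/2}/6=K\Gamma^{-3/2}/6$, and there are cross terms $H_2(w_1)H_1(w_2)$ from $\kappa_{112}$, and so on. Multiplying this density by $e^{\mu w_1}$ and completing the square gives:
\begin{itemize}
\item a leading factor $e^{\mu^2/2}$;
\item a shifted density in $w_1+\mu$, so that ${\rm E}\{e^{\mu W_1}g(W_1,W_2)\}=e^{\mu^2/2}{\rm E}\{g(W_1+\mu,W_2)\}(1+\cdots)$;
\item a Hermite correction that becomes $c_T^{-1}(K\Gamma^{-3/2}/6)\mu^3$, since ${\rm E}[e^{\mu W}H_3(W)]=\mu^3e^{\mu^2/2}$ for standard normal $W$.
\end{itemize}
Terms with $H_1(w_2)$ vanish against functions not depending on $w_2$.

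Applying this term by term at order $c_T^{-1}$:
\begin{itemize}
\item $it\,Q_{\theta,T}$ yields $it\,a_2(\mu)$.
\item $\frac{h^2}{2}J\Gamma^{-1/2}W_{1,T}$ yields $\frac{h^2}{2}J\Gamma^{-1/2}\mu$. This splits as $\frac{h^2}{2}J\Gamma^{-1/2}it\,a_1+\frac{h^3}{2}J$, and the second piece cancels $-\frac{h^3}{2}J$ from the $3J$ term.
\item $\sigma_2W_{2,T}$ has zero tilted mean to this order, by asymptotic orthogonality.
\item What remains is $-h^3K/6+K\Gamma^{-3/2}\mu^3/6$.
\end{itemize}
The leading exponent is $\mu^2/2-h^2\Gamma/2-ith=\frac12(it)^2a_1^2+it(a_1\Gamma^{1/2}-1)h$, which is exactly the prefactor in \eqref{eq:a4}. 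The residual contributions, namely the Hermite cross terms paired with the $c_T^{-1}$ factors, are $O(c_T^{-2})$.

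The main obstacle is rigor in the first step: controlling expectations of $\exp(it S+\Lambda)$ on the exceptional sets, and justifying the interchange of the Edgeworth density expansion with the unbounded weight $e^{\mu w_1}$ for real $h$. I would handle this through contiguity: $L_T$ is uniformly integrable under ${\rm P}_{\theta,T}$, and the Lemma~\ref{lem:Taniguchi1992} tail bound makes the neglected sets contribute $o(c_T^{-1})$. The delta-method replacement of $S_{\theta,T}$ by its expansion is justified by a standard lemma on characteristic functions of stochastic expansions (as in \cite{Taniguchi2000}). Consistent with the $o_{\rm P}(c_T^{-1})$ remainder in \eqref{eq:3-10}, the error term is understood as $O(c_T^{-2})$ in the same formal sense used by the statement.
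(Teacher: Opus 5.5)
Your proposal is correct and follows the paper's proof essentially step for step. The paper likewise writes $\exp(itS_{\theta,T}+\Lambda_T)$ as $\exp\{\mu W_{1,T}-h^2\Gamma/2\}[1+c_T^{-1}q_{\theta,T}]$ and integrates it against the Edgeworth density $\phi(w_1)\phi(w_2)\{1+c_T^{-1}r(w_1,w_2)\}$ by completing the square around $\mu$. It then integrates out $w_2$, which leaves only the $C_{111}H_3(w_1)/6$ term and hence $K\Gamma^{-3/2}\mu^3/6$, and uses the same cancellation of $h^3J/2$. The paper works at the same formal level and never addresses the exceptional-set and uniform-integrability issues you flag, so your closing caveat is, if anything, more careful than the original.
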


\begin{proof}[Proof of Lemma \ref{lem:3-2}] 
See the Appendix. 
\end{proof}

\begin{proof}[Proof of Theorem \ref{thm:2}]

In (\ref{eq:a4}), taking $a_1 = a_{1,\theta}\coloneqq \Gamma^{-1/2}\left(\theta\right)$ so that the condition for  first-order asymptotic efficiency and/or  first-order AMU is satisfied, we obtain (denoting by $\psi_{S_{\theta_T, T}}^*$ the characteristic function under ${\rm P}_{\theta_T,T}$ of a~$S_{\theta_T,T}$ that satisfies the first-order asymptotic efficiency and/or first-order AMU condition)
\begin{align}\label{eq:chaS}
\psi_{S_{\theta_{T}, T}}^*\left(t,h\right)&=\exp{\left\{{\left(it\right)^2\Gamma^{-1}\left(\theta\right)}/{2}\right\}}\Big[1+c_T^{-1}\Big\{it\, a_2\left(\mu\right)
 \nonumber\\ 
&\qquad 
+it\frac{h^2J\left(\theta\right)\Gamma^{-1}\left(\theta\right)}{2}-\frac{h^3K\left(\theta\right)}{6}+\frac{K\left(\theta\right)\Gamma^{-3/2}\left(\theta\right)}{6}\mu^3\Big\}\Big]+O\left(c_T^{-2}\right)
\end{align}
{under ${\rm P}_{\theta_T,T}$, as $T\to\infty$}, with $\mu=it\,\Gamma^{-1/2}\!\left(\theta\right)+h\Gamma^{1/2}\!\left(\theta\right)$.  
%
Replacing $h$ with~$-it\,\Gamma^{-1}\!\left(\theta\right)$ in~\eqref{eq:chaS} yields~$\mu=0$ and 
\begin{align}\label{eq:chaS2}
{
\psi_{S_{\theta_T, T}}^{*}\left(t,-it\,\Gamma^{-1}\left(\theta\right)\right)
} 
&=\exp{\left\{{\left(it\right)^2\Gamma^{-1}\left(\theta\right)}/{2}\right\}} \nonumber \\ 
&\qquad \times \left[1+c_T^{-1}\left\{it \,\omega_1-it^3\frac{\left\{3J\left(\theta\right)+K\left(\theta\right)\right\}}{6\Gamma^{3}\left(\theta\right)}\right\}\right]+O\left(c_T^{-2}\right).
\end{align}

Let us now concentrate on the derivation of the asymptotic expansion of the cha\-racteristic function~$\psi_{S_{\theta_T, T}}$ of $S_{\theta_T, T}$ under  ${\rm P}_{\theta_T,T}$, for a sequence $\{\widehat{\theta}_T\vert\, T\in{\mathbb N}\}$  
in  the class  $\mathcal{A}_\theta^{(2)}$ of   second-order regular estimators.  
 Estimators in this second-order regular class are such that the second-order asymptotic expansion of their characteristic function   
  does not depend on $h$. 

Therefore, 
{
from (\ref{eq:3-8}) 
}
\begin{align}\label{eq:chaS2-2}
{
\psi_{S_{\theta_T, T}}^{*}\left(t,-it\,\Gamma^{-1}\left(\theta\right)\right)=\psi_{S_{\theta_T, T}}^{*}\left(t,0\right)
=\psi_{S_{\theta, T}}^{*}\left(t,0\right)
}
\eqqcolon \psi_{\Gamma}\left(t\right),
\end{align}
which is the second-order expansion of the characteristic function  under ${\rm P}_{\theta,T}$ of
\[
\Gamma^{-1/2}\left(\theta\right)W_{1,T}\left(\theta\right)+c_T^{-1}Q_{\theta, T}\left(W_{1,T}\left(\theta\right),W_{2,T}\left(\theta\right)\right).
\]

In line with  \cite{Roussas1972} (see also p.69 of  \cite{Taniguchi2000}), similarly replacing~$h$ with~$-it\Gamma^{-1}\left(\theta\right)$ in 
 \eqref{eq:a4} 
yields  $\mu =\widetilde{\mu}\coloneqq it\,\left\{a_1-\Gamma^{-1/2}\left(\theta\right)\right\}\eqqcolon it\,a$, where~$a\coloneqq a_1-\Gamma^{-1/2}\left(\theta\right)$, and
\begin{align*}
&\psi_{{S}_{\theta_T,T}}\left(t,-it\Gamma^{-1}\left(\theta\right)\right)=\exp{\left\{\frac{\left(it\right)^2a_1^2}{2}-\left(it\right)^2\left\{a_1\Gamma^{1/2}\left(\theta\right)-1\right\}\Gamma^{-1}\left(\theta\right)\right\}}\\ 
&\quad \times \left[ 1+c_T^{-1}\left\{it\,a_2\left(\widetilde{\mu}\right) - it^3\left\{a_1\frac{J\left(\theta\right)}{2\Gamma^{5/2}\left(\theta\right)}
+\frac{K\left(\theta\right)}{6\Gamma^{3}\left(\theta\right)}\right\}+\frac{K\left(\theta\right)\Gamma^{-3/2}\left(\theta\right)}{6}\widetilde{\mu}^3\right\}\right]+O\left(c_T^{-2}\right)\\
&\qquad =\exp{\left\{\frac{\left(it\right)^2}{2}\left[\left\{a_1-\Gamma^{-1/2}\left(\theta\right)\right\}^2+\Gamma^{-1}\left(\theta\right)\right]\right\}} \\ 
&\quad \times \left[1+c_T^{-1}\left\{it\,a_2\left(\widetilde{\mu}\right) 
-it^3\left\{a_1\frac{J\left(\theta\right)}{2\Gamma^{5/2}\left(\theta\right)}+\frac{K\left(\theta\right)}{6\Gamma^{3}\left(\theta\right)}\right\}+\frac{K\left(\theta\right)\Gamma^{-3/2}\left(\theta\right)}{6}\widetilde{\mu}^3\right\}\right]+O\left(c_T^{-2}\right)\\
&\qquad =\exp{\left\{\frac{\left(it\right)^2\Gamma^{-1}\left(\theta\right)}{2}\right\}}\exp{\left\{\frac{\left(it\right)^2a^2}{2}\right\}}\left[1+c_T^{-1}\left\{it\,\omega_1-it^3\frac{\left\{3J\left(\theta\right)+K\left(\theta\right)\right\}}{6\Gamma^{3}\left(\theta\right)}\right\}\right]\\
&\quad \times \left[1+c_T^{-1}\left\{it\left\{a_2\left(\widetilde{\mu}\right)-\omega_1\right\}-it^3\frac{aJ\left(\theta\right)}{2\Gamma^{5/2}\left(\theta\right)}+\frac{K\left(\theta\right)\Gamma^{-3/2}\left(\theta\right)}{6}\widetilde{\mu}^3\right\}\right]+O\left(c_T^{-2}\right),
\end{align*}
the limit as $T\to\infty$ of which does not depend on $h$. Therefore, we can conclude that 
\begin{align*}
\psi_{{S}_{\theta_T,T}}\left(t,h\right)=\psi_{{S}_{\theta_T,T}}\left(t, 0\right)+O\left(c_T^{-2}\right)
\eqqcolon \psi_{\Gamma}\left(t\right)  \psi_{G}\left(t\right)+O\left(c_T^{-2}\right)
\end{align*}
where 
\begin{align}
&\psi_{\Gamma}\left(t\right)\coloneqq \exp{\left\{\frac{\left(it\right)^2\Gamma^{-1}\left(\theta\right)}{2}\right\}}\left[1+c_T^{-1}\left\{it\omega_1-it^3\frac{\left\{3J\left(\theta\right)+K\left(\theta\right)\right\}}{6\Gamma^{3}\left(\theta\right)}\right\}\right]
\end{align}
and 
\begin{align}
\psi_{G}\left(t\right)\coloneqq& \exp{\left\{{\left(it\right)^2a^2}/{2}\right\}}  \nonumber \\
&\quad \times\left[1+c_T^{-1}\left\{it\left\{a_2\left(\widetilde{\mu}\right)-\omega_1\right\}-it^3\frac{aJ\left(\theta\right)}{2\Gamma^{5/2}\left(\theta\right)}+\frac{K\left(\theta\right)\Gamma^{-3/2}\left(\theta\right)}{6}\widetilde{\mu}^3\right\}\right].
\end{align}
From (\ref{eq:chaS2-2}), we see that $\psi_{\Gamma}\left(t\right)$ is the second-order expansion of the characteristic function of~$\Gamma^{-1/2}\left(\theta\right)W_{1,T}\left(\theta\right)+c_T^{-1}Q_{\theta,T}\left(W_{1,T}\left(\theta\right),W_{2,T}\left(\theta\right)\right)$ under ${\rm P}_{\theta,T}$, while $\psi_{G}\left(t\right)$ is the second-order expansion of the characteristic function of the residual
\begin{align*}
G&\coloneqq S_{\theta,T}-\Gamma^{-1/2}\left(\theta\right)W_{1,T}\left(\theta\right)-c_T^{-1}Q_{\theta,T}\left(W_{1,T}\left(\theta\right),W_{2,T}\left(\theta\right)\right)\\
&=\left\{a_1-\Gamma^{-1/2}\left(\theta\right)\right\}W_{1,T}\left(\theta\right)+O\left(c_T^{-2}\right)=aW_{1,T}\left(\theta\right)+{ O_{\rm P}\left(c_T^{-2}\right)}. 
\end{align*}
Note that, when $a_1\coloneqq \Gamma^{-1/2}\left(\theta\right)$ (that is, $a=0,\  \widetilde{\mu}=0,\ a_2\left(\widetilde{\mu}\right)=a_2\left(0\right)=\omega_1$),  the conditions for  first-order asymptotic efficiency and/or  first-order AMU are satisfied and 
 the residual $G$  degenerates up to the second-order. 
\end{proof}

\begin{remark}{\rm 
The statement ``first-order efficiency implies second-order efficiency'' is often used in the literature on  higher-order asymptotics,  implying that
if we make the second-order bias adjustment (second-order unbiased, second-order AMU, etc.), then the bias-adjusted estimator becomes second-order
asymptotically efficient. In fact, this proof demonstrates that the condition for  first-order asymptotic efficiency---namely, the degeneracy of the first-order residual distribution---automatically entails the degeneracy of the second-order residual distribution as well. This implies that, under the condition of   first-order asymptotic efficiency, the second-order distributions of statistics belonging to the class of second-order regular estimators coincide up to the bias-correction term. }
\end{remark}

\section{Statistical applications: second-order robustness of second-order regular estimators} \label{sec:4}
The previous sections, for notational simplicity,\footnote{See \cite{Taniguchi1994} for some fundamental results in higher-order asymptotics in the multiparameter case.} are limited to one-parameter models.  
Our approach, however, extends to the multiparameter case, with a~$K$-dimensional
 parameter of the form~${\boldsymbol\theta} =(\theta_1,\ldots,\theta_K) = (\theta_i,{\boldsymbol\theta}^i)$ where a component of interest~$\theta_i$ of $ {\boldsymbol\theta} $ 
 is singled out while~${\boldsymbol\theta}^i$ collects the~$(K-1)$ remaining ones. Specifying~${\boldsymbol\theta}^i$ yields a one-parameter submodel with parameter~$\theta_i$. If the original model is LAN with $K$-dimensional central se\-quence~${\boldsymbol\Delta}_T(\theta_i,{\boldsymbol\theta}^i)$ and information matrix~${\boldsymbol\Gamma}(\theta_i,{\boldsymbol\theta}^i)$, then the specified-${\boldsymbol\theta}^i$ submodel is LAN with (scalar) central sequence the $i$th component~$\Delta_{T,i}(\theta_i,{\boldsymbol\theta}^i)$ of~${\boldsymbol\Delta}_T(\theta_i,{\boldsymbol\theta}^i)$ and (scalar) information quantity the $i$th diagonal element $\Gamma_{ii}(\theta_i,{\boldsymbol\theta}^i)$ of~${\boldsymbol\Gamma}(\theta_i,{\boldsymbol\theta}^i)$. Our results then apply for~$\theta = \theta_i$, with $\Delta_T(\theta)\coloneqq \Delta_{T,i}(\theta_i,{\boldsymbol\theta}^i))$, and $\Gamma (\theta)\coloneqq \Gamma_{ii}(\theta_i,{\boldsymbol\theta}^i)$.  This is, under general regularity conditions, the case of the ARMA models considered below.\footnote{We refer to \cite{Jeganathan1982}, \cite{Kreiss87},  \cite{GH95},  or Chapter~2 in \cite{Taniguchi2000} for  LAN results for ARMA  and VARMA models. }  To facilitate the application of the results of Section \ref{sec:3}, the dependence on ${\boldsymbol\theta}^i$ of  $\Delta_T(\theta)$, $\Gamma (\theta)$, and other $\theta$-related quantities such as the range $\Theta$ of $\theta$ is not reflected, unless necessary,  in the notation.

\color{black} 

\subsection{Second-order robustness of estimators}\label{multiparamsec}

Let  $Q_{{\theta,}T} $ 
be  as  in the expansion of $S_{\theta,T}$ for~$\{\widehat{\theta}_T\vert\, T\in{\mathbb N}\} \in \mathcal{D}_\theta$, see \eqref{eq:3-10}.   
 Define  the second-order robustness   of $Q_{\theta, T}$ as the function  
 {
\begin{align*}
{\theta \mapsto }   R_{\, Q}\left(\theta\right) \coloneqq  \lim_{T\to\infty}  \frac{\partial }{\partial \theta} {\rm E}_{\theta{,T}} \Big[Q_{\theta,T} \left\{ W_{1,T}\left(\theta\right),W_{2,T}\left(\theta\right)\right\}\Big]
\end{align*}
 provided that the derivative and the limit exist fr $\theta\in\Theta$. 

In this section, we study this second-order robustness  for Bayesian and minimum contrast estimators, and provide some concrete practical examples in the context of ARMA models.

\subsection{Second-order robustness of Bayesian estimators for ARMA processes}
 Let $\left\{X_t\vert\, t\in{\mathbb Z}\right\}$ denote an ARMA$\left(p,q\right)$ process with spectral density  
\begin{align}\label{spectralARMA}
f_{\boldsymbol\theta}\left(\lambda\right)=\frac{\sigma^2}{2\pi}\frac{\prod_{k=1}^{q}\left(1-\psi_ke^{i\lambda}\right)\left(1-\psi_ke^{-i\lambda}\right)}{\prod_{k=1}^{p}\left(1-\rho_ke^{i\lambda}\right)\left(1-\rho_ke^{-i\lambda}\right)},
\end{align}
parametrized by ${\boldsymbol\theta}\!\coloneqq\! ( {\rho_1,\ldots,\rho_p,\psi_1,\ldots,\psi_q},\sigma^2 )\in{\boldsymbol\Theta}$ with $\left|\rho_k\right|<1$, $k=1,\ldots,p$, $\left|\psi_k\right|<1$, $k=~\!1,\ldots,q$,  and $0<\sigma^2<\infty$. Single out one component $\theta\in\Theta$       and specify all other ones. 
 Let $\xi $ be a prior density over~$\Theta$. Suppose that $\theta\mapsto \xi\left(\theta\right)$ is twice continuously differentiable on $\Theta$, with derivatives~$\xi^\prime (\theta)$ and~$\xi ^{\prime\prime}(\theta)$. The Bayes estimator of $\theta$ is defined as 
\begin{align}
\widehat{\theta}_{B,   T}\coloneqq 
{\int_{\Theta}\theta {\rm P}_{\theta,T}\left(\boldsymbol{X}_T\right)\xi\left(\theta\right){\rm d}\theta}/{\int_{\Theta} {\rm P}_{\theta,T}\left(\boldsymbol{X}_T\right)\xi\left(\theta\right){\rm d}\theta}.
\end{align}
  Putting ${z}\coloneqq \sqrt{T}\left(\theta-\theta_0\right)$, 
     we obtain
\begin{align}
\sqrt{T}\left(\widehat{\theta}_{B, T}-\theta_0\right)=\frac{{\displaystyle\int} {z}\exp{\left[\log{\left\{{\rm P}_{\theta_0+{z}/\sqrt{T}}\left(\boldsymbol{X}_T\right)\right\}}\right]}\xi\left(\theta_0+{z}/\sqrt{T}\right){\rm d}{z}}{{\displaystyle\int} \exp{\left[\log{\left\{{\rm P}_{\theta_0+{z}/\sqrt{T}}\left(\boldsymbol{X}_T\right)\right\}}\right]}\xi\left(\theta_0+{z}/\sqrt{T}\right){\rm d}{z}}.
\end{align}
Expanding ${z\mapsto}\,\log{\big\{{\rm P}_{\theta_0+{z}/\sqrt{T}}\left(\boldsymbol{X}_T\right)\big\}}$ and ${z\mapsto} \,\xi\big(\theta_0+{z}/\sqrt{T}\big)$ in a  Taylor series {at $\theta=\theta_0$ (that is, around~$z=0$)}, we obtain (see  \cite{Swe1991} and Assumption~\ref{assum:B1-2})  the stochastic expansion 
\begin{align}
\sqrt{T}\left(\widehat{\theta}_{B, T}-\theta_0\right)=\frac{Z_1(\theta, T)}{\Gamma(\theta)}+\frac{{Q_{\theta , T}}}{\sqrt{T}}+o_{\rm P}\left(1\right),
\end{align}
where 
\[{Q_{\theta , T}}=\frac{Z_1(\theta, T)Z_2(\theta, T)}{\Gamma(\theta)^2}+\frac{-3J-K}{2\Gamma(\theta)^3}Z_1^2(\theta, T)+\frac{-3J-K}{2\Gamma^2(\theta)}+\frac{\xi^{\prime} (\theta)}{\xi  (\theta)\Gamma(\theta)}
.\]

Then ${\rm E}_{\theta,T}\left({ Q_{\theta , T}}\right)= {(-2J-K)}/{\Gamma^2(\theta)}+\left(\log{\xi}(\theta)\right)^{\prime}/{\Gamma (\theta)}$, hence
\begin{align*}
R_Q(\theta)=\left[\frac{-2J-K}{\Gamma^2(\theta)}+\frac{\left(\log{\xi}(\theta)\right)^{\prime}}{\Gamma(\theta)}\right]^{\prime}. 
\end{align*}

Call {\it second-order robust prior} any prior $\xi=\xi(\theta)$  such that $\frac{\partial}{\partial \theta} R_Q(\theta) = 0$ for~$\theta\in\Theta$. 
We then have the following result.

\begin{theorem}
A prior density $\xi=\xi\left(\theta\right)$  such that $\log{\xi}(\theta)$ satisfies, for $\theta\in\Theta$, 
\begin{align}\label{eq:4-5}
\left(\frac{-2J-K}{\Gamma^2(\theta)}\right)^{\prime}+\frac{\left(\log{\xi (\theta)}\right)^{\prime\prime}}{\Gamma(\theta)}+\left(\log{\xi (\theta)}\right)^{\prime}\left(\frac{1}{\Gamma (\theta)}\right)^{\prime}=0
\end{align}
 is   second-order robust. 
\end{theorem}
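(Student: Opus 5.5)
The plan is to verify directly that \eqref{eq:4-5} is exactly the condition $\frac{\partial}{\partial\theta}R_Q(\theta)=0$ prescribed by the definition of a second-order robust prior, using the formula for $R_Q$ obtained just above. First, I will recall the stochastic expansion of $\sqrt{T}(\widehat{\theta}_{B,T}-\theta_0)$, which places the Bayes estimator in $\mathcal{D}_\theta$ with $a_1=\Gamma^{-1/2}(\theta)$ (since $Z_1/\Gamma=\Gamma^{-1/2}W_{1,T}$) and with the displayed $Q_{\theta,T}$. I will also recall that, by Assumption~\ref{assum:B1-2}(iii), ${\rm E}_{\theta,T}(Z_1Z_2)=J(\theta)+O(c_T^{-2})$ and ${\rm E}_{\theta,T}(Z_1^2)=\Gamma(\theta)+O(c_T^{-2})$. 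Hence
\[
{\rm E}_{\theta,T}(Q_{\theta,T})=\frac{J}{\Gamma^2}-\frac{3J+K}{2\Gamma^2}-\frac{3J+K}{2\Gamma^2}+\frac{(\log\xi)'}{\Gamma}+o(1)=\frac{-2J-K}{\Gamma^2}+\frac{(\log\xi)'}{\Gamma}+o(1).
\]

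Second, I will justify interchanging $\lim_{T}$ and $\partial/\partial\theta$ in the definition of $R_Q$. The cumulant expansions in Assumption~\ref{assum:B1-2} hold with remainders $O(c_T^{-2})$. Under the smoothness implicit in (i)--(ii) and the standard ARMA regularity, where $J$, $K$, and $\Gamma$ are smooth in $\theta$ through the spectral density \eqref{spectralARMA}, these remainders are uniform on compact neighbourhoods and differentiable in $\theta$. The limit of the derivative therefore equals the derivative of the limit, which gives
\[
R_Q(\theta)=\Big(\frac{-2J-K}{\Gamma^2}\Big)'+\Big(\frac{(\log\xi)'}{\Gamma}\Big)'.
\]
I expect this interchange to be the only genuinely delicate point. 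I will handle it by taking the limit first on the explicit leading expression, since $R_Q$ is defined through the limiting expectation, and by assuming $\xi\in C^2$ as stated.

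Third, I will expand the second term by the product rule:
\[
\Big(\frac{(\log\xi)'}{\Gamma}\Big)'=\frac{(\log\xi)''}{\Gamma}+(\log\xi)'\Big(\frac1\Gamma\Big)'.
\]
With this, $R_Q(\theta)$ coincides exactly with the left-hand side of \eqref{eq:4-5}. If $\xi$ satisfies \eqref{eq:4-5} on $\Theta$, then $R_Q\equiv 0$ on $\Theta$, and consequently $\frac{\partial}{\partial\theta}R_Q(\theta)=0$ for all $\theta\in\Theta$. That is precisely the definition of a second-order robust prior, which proves the theorem.

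Finally, I will remark on solvability. Equation \eqref{eq:4-5} is a first-order linear ODE in $g=(\log\xi)'$, namely $(g/\Gamma)'=-\big((-2J-K)/\Gamma^2\big)'$. It integrates to $g=\Gamma\big[C+(2J+K)/\Gamma^2\big]$, so such priors exist, with $\xi=\exp\int g$, provided the resulting $\xi$ is integrable or is allowed to be improper.
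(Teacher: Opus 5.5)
Your proposal is correct and follows the paper's route. You compute ${\rm E}_{\theta,T}(Q_{\theta,T})=(-2J-K)/\Gamma^2+(\log\xi)'/\Gamma$ from the cumulant expansions, differentiate to obtain $R_Q$, and note via the product rule that \eqref{eq:4-5} is exactly $R_Q\equiv 0$, which in particular gives $\partial R_Q/\partial\theta=0$ as the definition requires; the paper also takes the interchange of $\lim_{T\to\infty}$ and $\partial/\partial\theta$ for granted, so treating it as an extra regularity assumption is appropriate.
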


{\black 
\begin{example}[ARMA$\left(p,q\right)$ case with $\theta=\rho_k$ for some $1\leq k\leq p$]
\label{ex:4-1}
 {\rm 
For the ARMA$\left(p,q\right)$ case with spectral density (\ref{spectralARMA}), assume that $\theta$ is the AR  para\-meter~$\rho_k$, $1\leq k\leq p$ while all other parameters are specified. Then  
\begin{align*}
&\Gamma\left(\theta\right)=\frac{1}{1-\theta^2},\quad K\left(\theta\right)=\frac{6\theta}{\left(1-\theta^2\right)^2},\quad \text{and} \quad J\left(\theta\right)=\frac{-2\theta}{\left(1-\theta^2\right)^2}
\end{align*}
(see p.~31 in  \cite{Taniguchi1991b}); condition (\ref{eq:4-5}) yields 
\begin{align*}
(\log{\xi(\theta)})^{\prime\prime}-\frac{2\theta}{1-\theta^2}(\log{\xi(\theta)})^{\prime}-\frac{2}{1-\theta^2}=0,
\end{align*}
leading to
\begin{align*}
(\log{\xi(\theta)})^{\prime}=\exp\Big[{\displaystyle \int\frac{2\theta}{1-\theta^2}{\rm d}\theta}\Big]\left\{\int \frac{2}{1-\theta^2}\exp\Big[{\int\frac{-2\theta}{1-\theta^2}{\rm d}\theta}\Big]{\rm d}\theta +c\right\}.
\end{align*}
Noting that  
\begin{align*}
&\int \frac{2\theta}{1-\theta^2}{\rm d}\theta =-\int\left(\frac{1}{1+\theta}-\frac{1}{1-\theta}\right){\rm d}\theta\\
&=-\log{\left(1+\theta\right)}-\log{\left(1-\theta\right)}+c=-\log{\left(1-\theta^2\right)}+ c,
\end{align*}
{we obtain the condition 
\begin{align*}
(\log{\xi(\theta)})^{\prime}&=\exp\Big[{\displaystyle \int\frac{2\theta}{1-\theta^2}{\rm d}\theta}\Big]\left\{\int \frac{2}{1-\theta^2}\exp\Big[{\int\frac{-2\theta}{1-\theta^2}{\rm d}\theta}\Big]{\rm d}\theta +c\right\}\\
&=\exp\Big\{-\log{\left(1-\theta^2\right)}\Big\}\left\{\int \frac{2}{1-\theta^2}\exp\Big\{\log{\left(1-\theta^2\right)}\Big\}{\rm d}\theta +c\right\}\\
&=\frac{1}{\left(1-\theta^2\right)}\left\{\int \frac{2}{1-\theta^2}\left(1-\theta^2\right){\rm d}\theta +c\right\}
=\frac{2\theta +c}{1-\theta^2}.
\end{align*}
}
Since, for any constant $c$, 
\begin{align*}
\frac{2\theta}{1-\theta^2}=\frac{1}{1-\theta}-\frac{1}{1+\theta}=-\left(\log{\left(1-\theta\right)}\right)^{\prime}-\left(\log{\left(1+\theta\right)}\right)^{\prime}
=\left[\log{\left\{\left(1-\theta^2\right)^{-1}\right\}}+c\right]^{\prime},
\end{align*}
any prior  
$ \xi \left(\theta\right)$ of the form $c(1-\theta^2)^{-1}$ 
  is  second-order robust.  The constant $c$ then should be chosen such that $\xi$ integrates up to one on $\Theta$ (recall that $\Theta$, in general,  depends on the specified components of~$\boldsymbol\theta$). Namely, the prior 
 \begin{equation}\label{xirobust}
 \xi \left(\theta\right)\coloneqq \Big[(1-\theta^2)\int_{\Theta} (1-\theta^2)^{-1}{\rm d}\theta\Big]  ^{-1},\quad \theta\in\Theta 
 \end{equation}
 is  second-order robust. 
}
\end{example}
\vspace{-2mm}

\begin{example}[ARMA$\left(p,q\right)$ case with $\theta=\psi_k$  { for some $1\leq k\leq q$}]
{\rm 
 Still for the ARMA$\left(p,q\right)$ case with spectral density (\ref{spectralARMA}), assume that $\theta$ is the MA  para\-meter~$\psi_k$,  for some~$1\leq k\leq q$ while all other parameters are specified. Then,  
\begin{align*}
&\Gamma\left(\theta\right)=\frac{1}{1-\theta^2},\quad K\left(\theta\right)=\frac{-6\theta}{\left(1-\theta^2\right)^2},\quad \text{and}\quad J\left(\theta\right)=\frac{4\theta}{\left(1-\theta^2\right)^2}
\end{align*}
(see p. 31 in  \cite{Taniguchi1991b}). 

Proceeding as in Example~\ref{ex:4-1}, we obtain, for  (\ref{eq:4-5}),  
\begin{align*}
(\log{\xi(\theta)})^{\prime\prime}-\frac{2\theta}{1-\theta^2}(\log{\xi(\theta)})^{\prime}-\frac{2}{1-\theta^2}=0, 
\end{align*}
hence 
$
\log{\xi(\theta)}=-\log{\left(1-\theta^2\right)}  +c
$.  
It follows that the same prior \eqref{xirobust} 
 as in Example \ref{ex:4-1}  
 is  second-order robust for $\psi_k$---a somewhat unexpected result.
}
\end{example}
 }
 
 \subsection{Second-order robustness for minimum contrast estimators}

In this section, we eva\-luate $R_Q(\theta)$ for a general class of minimum contrast estimators for Gaussian ARMA processes $\left\{X_t\right\}$ with spectral density $f_{\boldsymbol\theta}\left(\lambda\right)$. Let $\boldsymbol{X}_T\coloneqq \left(X_1,\ldots,X_T\right)^{\top}$, and let $A_T\left(\theta\right)$ and~$B_T\left(\theta\right)$ denote the $T\times T$ Toeplitz matrices with   $\left(m,l\right)$th elements   
 $$\int_{-\pi}^{\pi}\exp\left\{i\left(m-l\right)\lambda\right\}g_{\theta}\left(\lambda\right){\rm d}\lambda  \quad\text{and}\quad \int_{-\pi}^{\pi}\exp\left\{i\left(m-l\right)\lambda\right\}h_{\theta}\left(\lambda\right){\rm d}\lambda,\quad m,l = 1,\ldots,T,$$
  respectively. The spectral forms $g_{\theta}\left(\lambda\right)$ and $h_{\theta}\left(\lambda\right)$ satisfy
\begin{align}
g_{\theta}^{-1}\left(\lambda\right)h_{\theta}\left(\lambda\right)=\frac{1}{2}f_{\theta}^{-1}\left(\lambda\right)\frac{\partial}{\partial \theta}f_{\theta}\left(\lambda\right).
\end{align}
\cite{Taniguchi1987} introduced a general class $\mathcal{G}$ of minimum contrast estimates~$\widehat{\theta}_T^{M}$ defined as a value of~$\theta$ that satisfies the equation
\begin{align}
\frac{1}{T}\boldsymbol{X}_T^{\top}A_T^{-1}\left(\theta\right)B_T\left(\theta\right)A_T^{-1}\left(\theta\right)\boldsymbol{X}_T=b_T\left(\theta\right),
\end{align}
where $\displaystyle{b_T\left(\theta\right)\coloneqq \frac{1}{4\pi}\int^{\pi}_{-\pi}f_{\theta}^{-1}\left(\lambda\right)\frac{\partial}{\partial \theta}f_{\theta}\left(\lambda\right)d\lambda}$. That class $\mathcal{G}$ includes the MLE and  Whittle estimators of $\theta$. For $H_T\left(\theta\right)=A_T^{-1}\left(\theta\right)B_T\left(\theta\right)A_T^{-1}\left(\theta\right)$, let
\begin{align}
&\widetilde{Z}_1\left(\theta\right)=\frac{1}{\sqrt{T}}\left\{\boldsymbol{X}_T^{\top}H_T\left(\theta\right)\boldsymbol{X}_T-Tb_T\left(\theta\right)\right\}
\end{align}
and
\begin{align}
&\widetilde{Z}_2\left(\theta\right)=\frac{1}{\sqrt{T}}\left\{\boldsymbol{X}_T^{\top}\frac{\partial}{\partial\theta}H_T\left(\theta\right)\boldsymbol{X}_T-{\rm tr}\left\{E\left(\boldsymbol{X}_T\boldsymbol{X}_T^{\top}\right)\frac{\partial}{\partial\theta}H_T\left(\theta\right)\right\}\right\}.
\end{align}
Under appropriate regularity conditions,  \cite{Taniguchi1987} derived the stochastic expansion 
\begin{align}
\ \sqrt{T}\left(\widehat{\theta}_T^{M}\!\!-\theta\right) =\, & \Gamma^{-1}\left(\theta\right)\widetilde{Z}_1\left(\theta\right)\nonumber\\ \label{Eq410}
&\ \ +\frac{1}{\sqrt{T}\Gamma^{2}\left(\theta\right)}\left\{\widetilde{Z}_1\left(\theta\right)\widetilde{Z}_2\left(\theta\right)-\frac{\left\{3J\left(\theta\right)+K\left(\theta\right)\right\}}{2\Gamma\left(\theta\right)}\widetilde{Z}_1^2\left(\theta\right)\right\} +o_{\rm P}\left(\frac{1}{\sqrt{T}}\right)\\\nonumber
=\, & \Gamma^{-1}\left(\theta\right)\left[\widetilde{Z}_1\left(\theta\right)-E\left\{\widetilde{Z}_1\left(\theta\right)\right\}\right]\\\nonumber
&\ \ +\frac{1}{\sqrt{T}}\left\{\frac{\mu\left(\theta\right)}{\Gamma\left(\theta\right)}+\frac{\widetilde{Z}_1\left(\theta\right)\widetilde{Z}_2\left(\theta\right)}{\Gamma^{2}\left(\theta\right)}-\frac{\left\{3J\left(\theta\right)+K\left(\theta\right)\right\}\widetilde{Z}_1^2\left(\theta\right)}{2\Gamma^3\left(\theta\right)}\right\}+o_{\rm P}\left(\frac{1}{\sqrt{T}}\right)\\\nonumber
=&\,\Gamma^{-1}\left\{\widetilde{Z}_1-E\left(\widetilde{Z}_1\right)\right\}+\frac{1}{\sqrt{T}}\widetilde{Q}+o_{\rm P}\left(\frac{1}{\sqrt{T}}\right) ,\mbox{ say},
\end{align}
for $\widehat{\theta}_T^{M}$, where $E\left(\widetilde{Z}_1\right)=\frac{1}{\sqrt{T}}\mu\left(\theta\right)+o\left(\frac{1}{\sqrt{T}}\right)$. We then   obtain
\begin{align}
R_{Q}(\theta)=\left(\frac{\mu}{\Gamma}\right)^{\prime}+\left(\frac{-J-K}{2\Gamma^2}\right)^{\prime}.
\end{align}

We now proceed with the evaluation of $\Gamma\left(\theta\right)$, $K\left(\theta\right)$, and $J\left(\theta\right)$ for various models with rational spectral densities. Furthermore, we also derive the quantities~$\mu_{\left(\cdot\right)}\left(\theta\right)$ for the maximum likelihood estimators~$\widehat{\theta}_{\text{\,\rm ML}}$ and the Whittle estimators $\widehat{\theta}_{\text{\,\rm Whittle}}$. We then obtain the second-order robustness $R_{Q}\big(\widehat{\theta}_{\left(\cdot\right)}\big)$ of these estimators and compare their robustness  via  the ratio~$r\coloneqq  {R_{Q}\big(\widehat{\theta}_{\text{\,\rm Whittle}}\big)}/{R_{Q}\big(\widehat{\theta}_{\text{\,\rm ML}}\big)}$ or the difference~$\tilde{r}\coloneqq  {R_{Q}\big(\widehat{\theta}_{\text{\,\rm Whittle}}\big)} - {R_{Q}\big(\widehat{\theta}_{\text{\,\rm ML}}\big)}$. 

\begin{example}[AR$\left(1\right)$ model]

{\rm 
For the AR$\left(1\right)$ process 
 $
X_t-\rho X_{t-1}=\varepsilon_t 
$ \linebreak 
with~$\vert\rho\vert <~\!1$ (hence $\Theta = (-1,\, 1)$) and $\varepsilon_t\stackrel{i.i.d.}{\sim}N\left(0,\sigma^2\right)$,  \cite{Taniguchi1991b} pp. 30--33 obtains 
\begin{align*}
\Gamma\left(\rho\right)=\frac{1}{1-\rho^2},\quad K\left(\rho\right)&=\frac{6\rho}{\left(1-\rho^2\right)^2},\quad J\left(\rho\right)=\frac{-2\rho}{\left(1-\rho^2\right)^2},\\
\mu_{\text{\,\rm ML}}\left(\rho\right)=0,\quad & \text{and}\quad \mu_{\text{\,\rm Whittle}}\left(\rho\right)=\frac{-\rho}{1-\rho^2}
\end{align*}
($\sigma^2$ plays no role here and can safely be omitted in the analysis). Therefore, 
\begin{align*}
&R_{Q}\left(\widehat{\rho}_{\text{\,\rm ML}}\right)=0+\left(-2\rho\right)^{\prime}=-2\quad\text{and}\quad 
R_{Q}\left(\widehat{\rho}_{\text{\,\rm Whittle}}\right)=\left(-\rho\right)^{\prime}+\left(-2\rho\right)^{\prime}=-1-2,
\end{align*}
hence
 $
r\coloneqq 
{R_{Q}\left(\widehat{\rho}_{\text{\,\rm Whittle}}\right)}/{R_{Q}\left(\widehat{\rho}_{\text{\,\rm ML}}\right)}=\frac{-1}{-2}+1>1.
$
It follows that 
the maximum likelihood estimator~$\widehat{\rho}_{\text{\,\rm ML}}$ is uniformly more robust than the Whittle estimator~$\widehat{\rho}_{\text{\,\rm Whittle}}$ for AR$\left(1\right)$ processes. This result is plausible, since the Whittle estimator is known to have a large bias.
}
\end{example}

\begin{example}[MA$\left(1\right)$ model]
{\rm 
For the MA$\left(1\right)$ process
 $
X_t=\varepsilon_t-\psi\varepsilon_{t-1}
$ 
with~$\vert\psi\vert <~\!1$  (hence $\Theta = (-1,\, 1)$) and~$\varepsilon_t\stackrel{i.i.d.}{\sim}N\left(0,\sigma^2\right)$, we obtain
\begin{align*}
\Gamma\left(\psi\right)=\frac{1}{1-\psi^2},\quad K\left(\psi\right)&=\frac{-6\psi}{\left(1-\psi^2\right)^2},\quad J\left(\psi\right)=\frac{4\psi}{\left(1-\psi^2\right)^2},\\
\mu_{\text{\,\rm ML}}\left(\psi\right)=0,&\quad\mu_{\text{\,\rm Whittle}}\left(\psi\right)=\frac{-\psi\left(1+\psi^2\right)}{\left(1-\psi^2\right)^2}.
\end{align*}
Then, we see that
\begin{align*}
&R_{Q}\left(\widehat{\psi}_{\text{\,\rm ML}}\right)=0+\left(\psi\right)^{\prime}=1
\end{align*}
and
\begin{align*}
R_{Q}\left(\widehat{\psi}_{\text{\,\rm Whittle}}\right)=\left(\frac{-\psi-\psi^3}{1-\psi^2}\right)^{\prime}+\left(\psi\right)^{\prime}=\frac{\psi^4-4\psi^2-1}{\left(1-\psi^2\right)^2}+1,
\end{align*}
and, therefore,
\begin{align*}
r\coloneqq \frac{R_{Q}\left(\widehat{\psi}_{\text{\,\rm Whittle}}\right)}{R_{Q}\left(\widehat{\psi}_{\text{\,\rm ML}}\right)}=\frac{\psi^4-4\psi^2-1}{\left(1-\psi^2\right)^2}+1<1.
\end{align*}
The Whittle estimator $\widehat{\rho}_{\text{\,\rm Whittle}}$, thus,  is always more robust here than the maxi\-mum likelihood estimator~$\widehat{\rho}_{\text{\,\rm ML}}$---reversing the  conclusion previously made in Example~4.3 for  AR$\left(1\right)$ models. 
}
\end{example}

\begin{example}[ARMA$\left(1,1\right)$ model, unspecified $\theta = \sigma^2$, specified  $\rho$ and $\psi$]

{\rm 
For the ARMA$\left(1,1\right)$ process
 $
X_t-\rho X_{t-1}=\varepsilon_t-\psi\varepsilon_{t-1}
$ 
with $\varepsilon_t\stackrel{i.i.d.}{\sim}N\left(0,\sigma^2\right)$, where~$\theta = \sigma^2$ is unspecified (with $\Theta=(0,\infty)$) and $(\rho, \psi)\in (-1,\, 1)^2$ are specified, we obtain
\begin{align*}
\Gamma\left(\sigma^2\right)=\frac{1}{2\sigma^4},&\quad K\left(\sigma^2\right)=\frac{1}{\sigma^6},\quad J\left(\sigma^2\right)=-\frac{1}{\sigma^6},\\
\mu_{\text{\,\rm ML}}\left(\sigma^2\right)=0,\quad&\text{and}\quad\mu_{\text{\,\rm Whittle}}\left(\sigma^2\right)=\frac{\left(\rho-\psi\right)^2}{\sigma^2\left(1-\rho^2\right)\left(1-\psi^2\right)}.
\end{align*}
Then, we see that
\begin{align*}
&R_{Q}\left(\widehat{\sigma}^2_{\text{\,\rm ML}}\right)=0+0=0,\\
&R_{Q}\left(\widehat{\sigma}^2_{\text{\,\rm Whittle}}\right)=\left\{\frac{2\sigma^2\left(\rho-\psi\right)^2}{\left(1-\rho^2\right)\left(1-\psi^2\right)}\right\}^{\prime}+0=\frac{2\left(\rho-\psi\right)^2}{\left(1-\rho^2\right)\left(1-\psi^2\right)},
\end{align*}
and, therefore,
\begin{align*}
\widetilde{r}\coloneqq R_{Q}\left(\widehat{\sigma}^2_{\text{\,\rm Whittle}}\right)-R_{Q}\left(\widehat{\sigma}^2_{\text{\,\rm ML}}\right)=\frac{2\left(\rho-\psi\right)^2}{\left(1-\rho^2\right)\left(1-\psi^2\right)}\ge 0.
\end{align*}
That is, the maximum likelihood estimator $\widehat{\rho}_{\text{\,\rm ML}}$ is always more robust than the Whittle estimator~$\widehat{\rho}_{\text{\,\rm Whittle}}$.  
}
\end{example}

\begin{figure}[b!]
  \begin{center}
\includegraphics[width=3in]{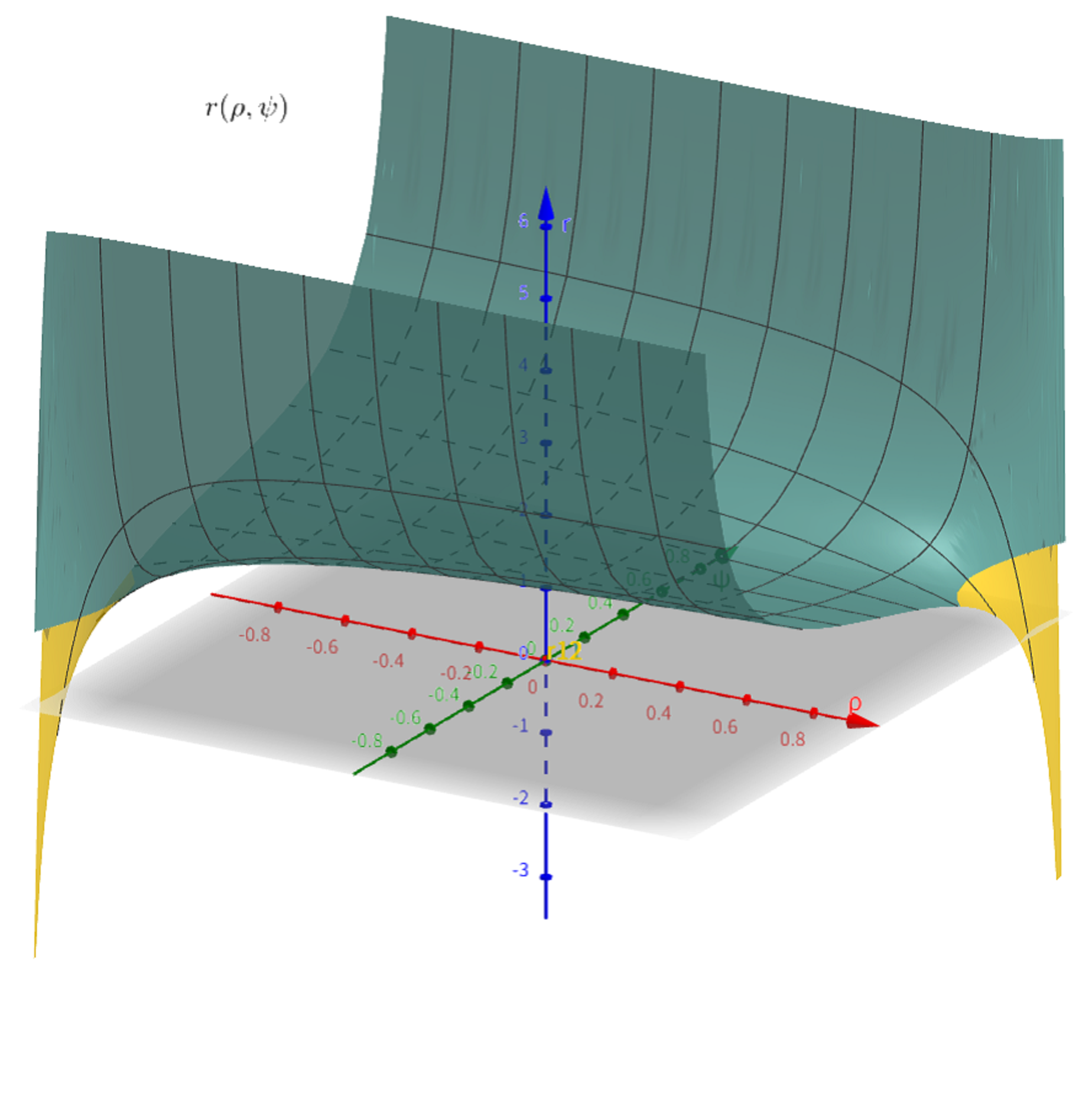}
\vspace{-15mm}  \end{center}
\caption{The graph of $\left(\rho,\psi\right)\mapsto r\left(\rho,\psi\right)$ for ARMA$\left(1,1\right)$ with $\rho$ unspecified, $\psi$ and  $\sigma^2$ specified (Example 4.6).  
 The parts of the graph 
  where the maximum likelihood estimator~$\widehat{\rho}_{\text{\,\rm ML}}$ is more (respectively, less) robust than the Whittle estimator $\widehat{\rho}_{\text{\,\rm Whittle}}$  are colored in green (respectively, in yellow).
}
\label{fig:10}
\end{figure}

\begin{example}[ARMA$\left(1,1\right)$ model, unspecified $\theta = \rho$, specified $\psi$ and  $\sigma^2$]
{\rm 
For the same ARMA$\left(1,1\right)$ process  
$
X_t-\rho X_{t-1}=\varepsilon_t-\psi\varepsilon_{t-1}
$ 
with $\varepsilon_t\stackrel{i.i.d.}{\sim}N\left(0,\sigma^2\right)$, where $\theta=\rho$ is unspecified, and $\psi$, $\sigma^2$ are specified, we obtain 
(note that, in this model, $\boldsymbol\Theta = (-1,\, 1)^2$ and~$\Theta = (-1,\, 1)$ irrespective of the value of~$\psi$)
\begin{align*}
&\ \ \Gamma\left(\rho\right)=\frac{1}{1-\rho^2},\quad K\left(\rho\right)=\frac{6\rho}{\left(1-\rho^2\right)^2},\quad J\left(\rho\right)=\frac{-2\rho}{\left(1-\rho^2\right)^2},\\
&\mu_{\text{\,\rm ML}}\left(\rho\right)=0,\quad\text{and}\quad \mu_{\text{\,\rm Whittle}}\left(\rho\right)=\frac{-\left(\rho-\psi\right)\left(1-2\rho\psi+\psi^2\right)}{\left(1-\rho^2\right)\left(1-\rho\psi\right)\left(1-\psi^2\right)}.
\end{align*}
Therefore, 
\begin{align*}
R_{Q}\left(\widehat{\rho}_{\text{\,\rm ML}}\right)&=0+\left(-2\rho\right)^{\prime}=-2,\\
R_{Q}\left(\widehat{\rho}_{\text{\,\rm Whittle}}\right)&=\left\{\frac{-\left(\rho-\psi\right)\left(1-2\rho\psi+\psi^2\right)}{\left(1-\rho\psi\right)\left(1-\psi^2\right)}\right\}^{\prime}+\left(-2\rho\right)^{\prime}\\
&=-\frac{\left(1+2\psi^2-\psi^4-4\rho\psi+2\rho^2\psi^2\right)}{\left(1-\psi^2\right)\left(1-\rho\psi\right)^2}-2,
\end{align*}
hence 
\begin{align*}
r\coloneqq \frac{R_{Q}\left(\widehat{\rho}_{\text{\,\rm Whittle}}\right)}{R_{Q}\left(\widehat{\rho}_{\text{\,\rm ML}}\right)}=\frac{\left(1+2\psi^2-\psi^4-4\rho\psi+2\rho^2\psi^2\right)}{2\left(1-\psi^2\right)\left(1-\rho\psi\right)^2}+1.
\end{align*}

\begin{figure}[b!]
\begin{center}
\includegraphics[width=2.3in]{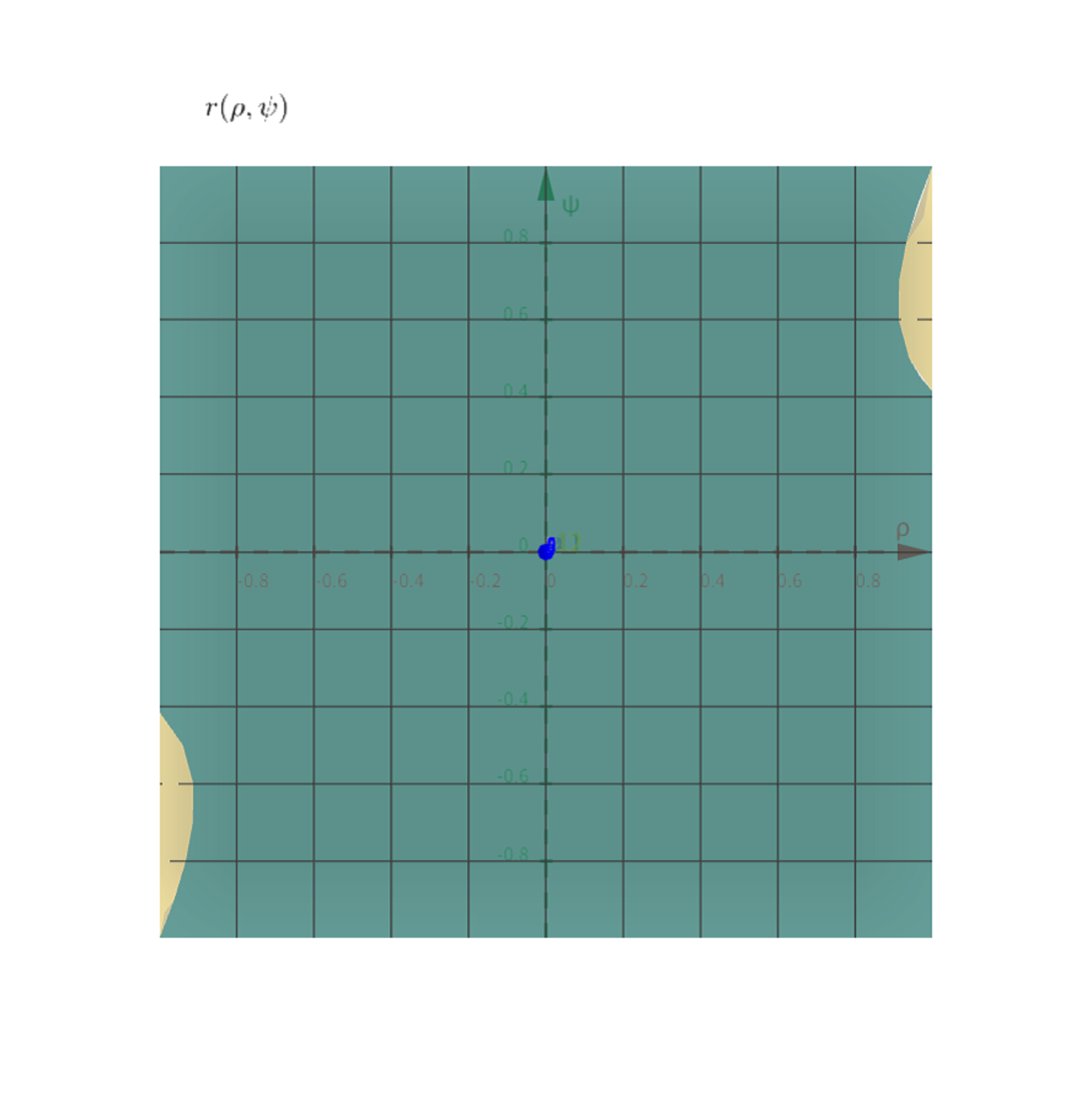}
\includegraphics[width=2.3in]{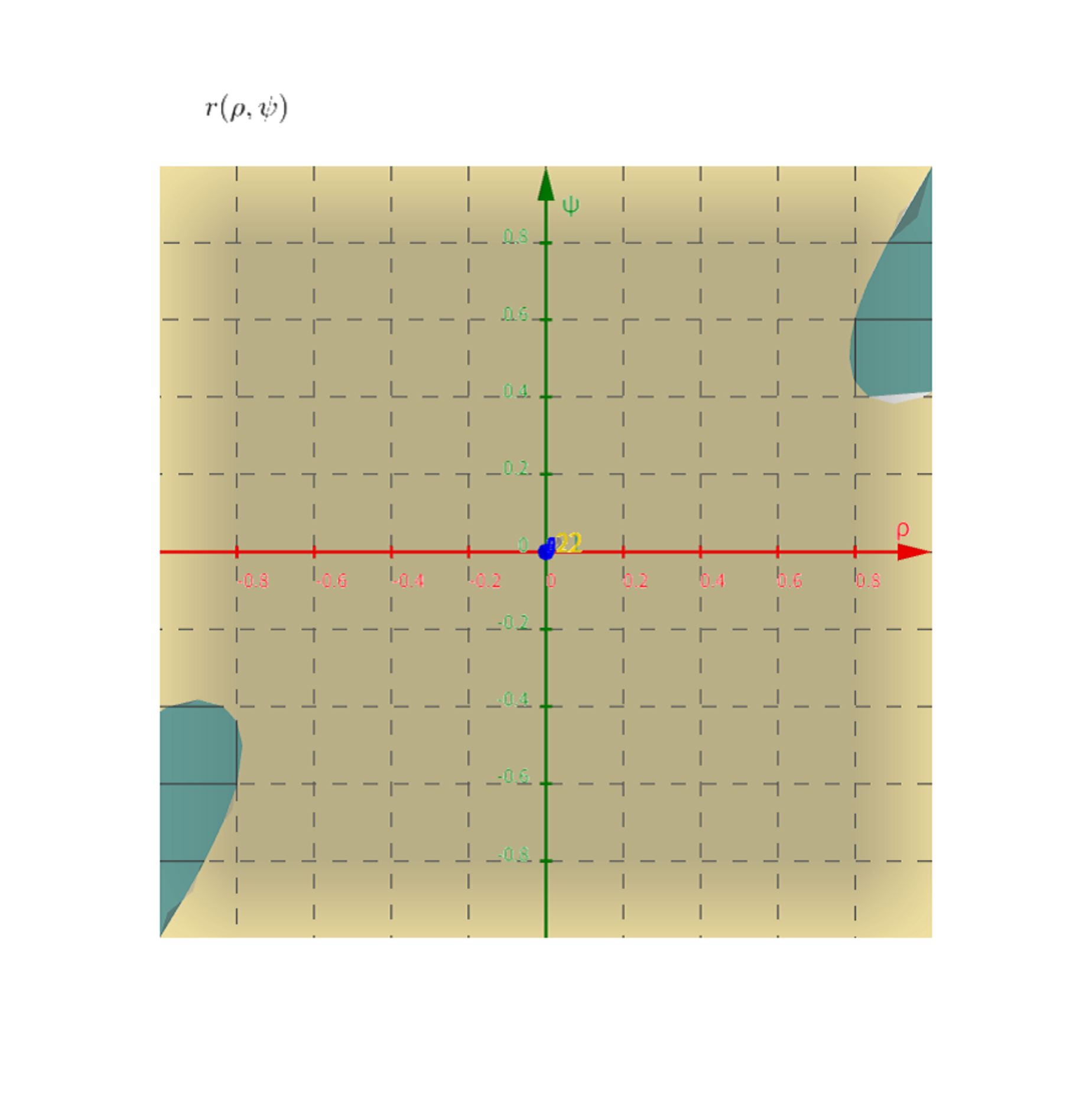}
\end{center}
\caption{The top-down views (from the direction of ``the North Pole'') of the graph of $r\left(\rho,\psi\right)$ for ARMA$\left(1,1\right)$  with~$\rho$ unspecified, $\psi$ and  $\sigma^2$ specified (Example 4.6, left-hand panel) and  with~$\psi$ unspecified, $\rho$ and  $\sigma^2$ specified (Example 4.7, right-hand panel), respectively.  The~$(\rho,\psi)$ values at which the maximum likelihood estimator~$\widehat{\rho}_{\text{\,\rm ML}}$ is more robust than the Whittle estimator~$\widehat{\rho}_{\text{\,\rm Whittle}}$ are  in green, those at which  the Whittle estimator $\widehat{\rho}_{\text{\,\rm Whittle}}$ is more robust than the maximum likelihood estimator $\widehat{\rho}_{\text{\,\rm ML}}$ are in yellow. }
\label{fig:120}
\end{figure}

The comparison between $\widehat{\rho}_{\text{\,\rm ML}}$ and $\widehat{\rho}_{\text{\,\rm Whittle}}$, thus, depends on both $\rho$ and $\psi$, but not on~$\sigma^2$.  The graph of~$(\rho,\psi)\mapsto r\left(\rho,\psi\right)$ for ARMA$\left(1,1\right)$ models, where $\rho$ is unspecified, and $\psi$, $\sigma^2$ are specified, is shown in Figure \ref{fig:10}. The 
 range in which the maximum likelihood esti\-mator~$\widehat{\rho}_{\text{\,\rm ML}}$ is more robust than the Whittle estimator~$\widehat{\rho}_{\text{\,\rm Whittle}}$ (the $(\rho,\psi)$ values such that $r\left(\rho,\psi\right)>1$) and the range in which the Whittle estimator~$\widehat{\rho}_{\text{\,\rm Whittle}}$ is more robust than the maximum likelihood estimator $\widehat{\rho}_{\text{\,\rm ML}}$ (i.e., $r\left(\rho,\psi\right)<1$)   are colored in green and yellow, respectively. 
  The top-down view (from the direction of ``the North Pole'') of the graph of $r\left(\rho,\psi\right)$ is provided in Figure~\ref{fig:120} (left-hand panel).  
 It is seen that $r\left(\rho,\psi\right)$ is predominantly above 1 and tends to $+\infty$ as $\left|\psi\right|\to 1$ (except around $\left(\rho,\psi\right)=\pm\left(1,1\right)$),   
  while~$r\left(\rho,\psi\right)$ tends to $-\infty$ as~$\rho\times\psi\to 1$. The innovation variance $\sigma^2$ plays no role. 
}
\end{example}
\begin{figure}[b!]
  \begin{center}
\includegraphics[width=3in]{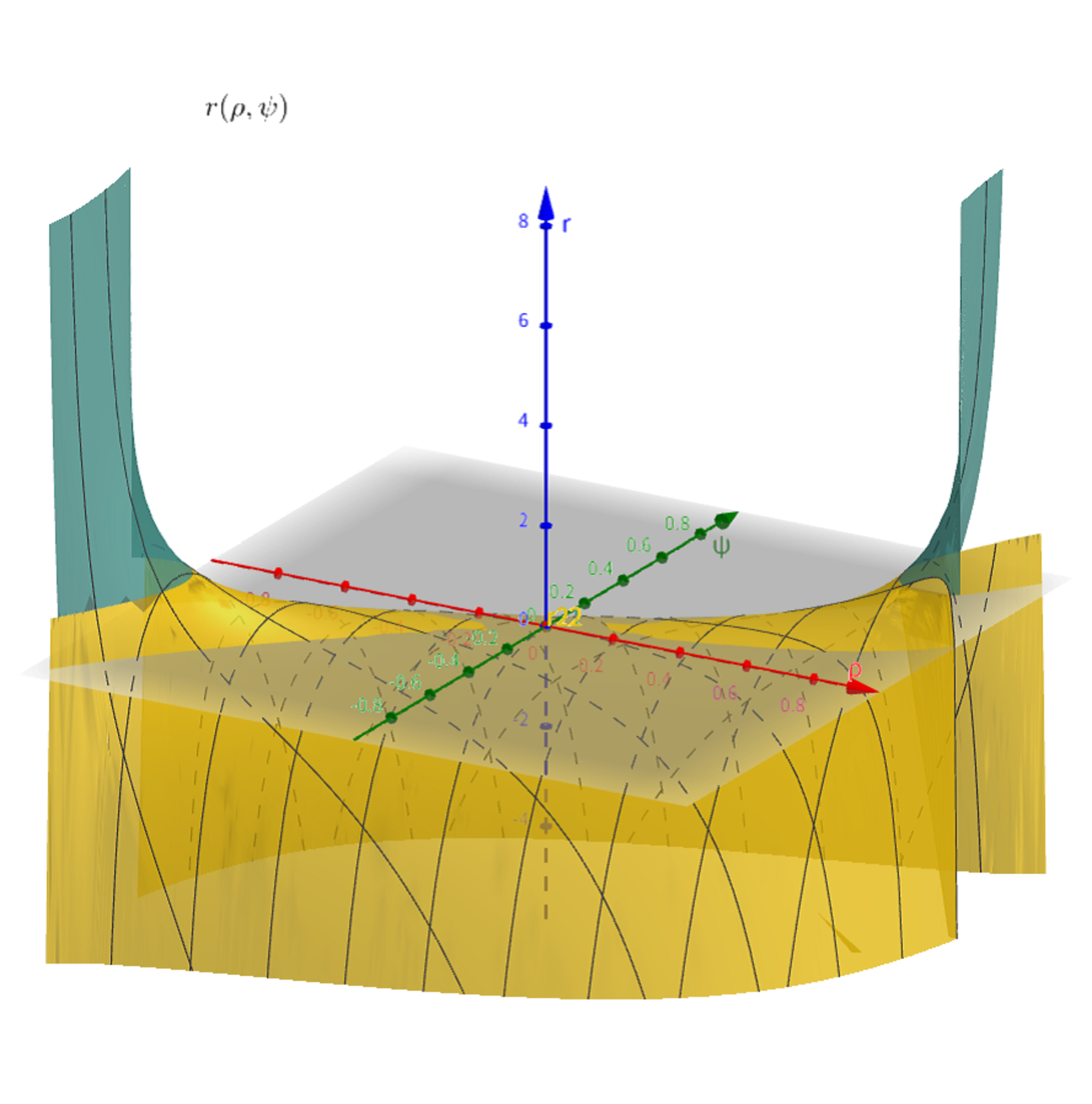}
  \end{center}
\caption{The graph of $\left(\rho,\psi\right)\mapsto r\left(\rho,\psi\right)$ for ARMA$\left(1,1\right)$ with $\psi$ unspecified, $\rho$ and  $\sigma^2$ specified (Example 4.7). The parts of the graph   where the maximum likelihood estimator~$\widehat{\rho}_{\text{\,\rm ML}}$ is more (respectively, less) robust than the Whittle estimator $\widehat{\rho}_{\text{\,\rm Whittle}}$  are colored in green (respectively, in yellow).  
}
\label{fig:20}
\end{figure}

\begin{example}[ARMA$\left(1,1\right)$ model, $\psi$ unspecified, $\rho$ and $\sigma^2$specified]
\rm 
Still for the ARMA$\left(1,1\right)$ process
 $
X_t-\rho X_{t-1}=\varepsilon_t-\psi\varepsilon_{t-1}
$ 
with $\varepsilon_t\stackrel{i.i.d.}{\sim}N\left(0,\sigma^2\right)$, now with $\psi$  unspe\-cified and $\rho$, $\sigma^2$ specified, we obtain
\begin{align*}
\Gamma\left(\psi\right)=\frac{1}{1-\psi^2},\quad  K\left(\psi\right)&=\frac{-6\psi}{\left(1-\psi^2\right)^2},\quad J\left(\psi\right)=\frac{4\psi}{\left(1-\psi^2\right)^2},\\
\mu_{\text{\,\rm ML}}\left(\psi\right)=0,\  \ \text{and}\ \ \mu_{\text{\,\rm Whittle}}\left(\psi\right)=&\frac{-\left(\psi-\rho\right)\left(1+\psi^2-2\psi\rho-\rho^2+3\psi^2\rho^2-2\psi^3\rho\right)}{\left(1-\psi^2\right)^2\left(1-\psi\rho\right)\left(1-\rho^2\right)}.
\end{align*}
Then, 
\begin{align*}
&R_{Q}\left(\widehat{\psi}_{\text{\,\rm ML}}\right)=0+\left(\psi\right)^{\prime}=1,\\
&R_{Q}\left(\widehat{\psi}_{\text{\,\rm Whittle}}\right)=\left\{\frac{-\left(\psi-\rho\right)\left(1\!+\! \psi^2-2\psi\rho-\rho^2\!+\! 3\psi^2\rho^2-2\psi^3\rho\right)}{\left(1-\psi^2\right)\left(1-\psi\rho\right)\left(1-\rho^2\right)}\right\}^{\prime}\!+\! \left(\psi\right)^{\prime}\\
&=\frac{2 \rho^2\psi^6\!\!-\!4 \rho\psi^5\!\!-\!3\rho^4\psi^4\!\!-\!4\rho^2\psi^4\!+\! \psi^4\!+\! 12 \rho^3\psi^3\!+\! 12 \rho\psi^3\!\!-\!22\rho^2\psi^2\!\!-\!4\psi^2\!+\! 4\rho^3\psi\!+\! 8\rho\psi-\rho^4\!\!-\!1}{\left(1\!\!-\!\psi^2\right)^2\left(1\!\!-\!\psi\rho\right)^2\left(1\!\!-\!\rho^2\right)}\!+\! 1,
\end{align*}
and, therefore,
\begin{align*}
&r\coloneqq 
{R_{Q}\left(\widehat{\psi}_{\text{\,\rm Whittle}}\right)}/{R_{Q}\left(\widehat{\psi}_{\text{\,\rm ML}}\right)}\\
&=\frac{2 \rho^2\psi^6\!-\! 4 \rho\psi^5\!-\! 3\rho^4\psi^4\!-\! 4\rho^2\psi^4+\psi^4\!+\! 12 \rho^3\psi^3\!+\! 12 \rho\psi^3\!-\! 22\rho^2\psi^2\!-\! 4\psi^2\!+\! 4\rho^3\psi\!+\! 8\rho\psi\!-\! \rho^4\!-\! 1}{\left(1\!-\! \psi^2\right)^2\left(1\!-\! \psi\rho\right)^2\left(1\!-\! \rho^2\right)}\!+\! 1.
\end{align*}

The resulting graph of $\left(\rho,\psi\right)\mapsto r\left(\rho,\psi\right)$ 
  is plotted in Figure \ref{fig:20}. The 
 range in which the maximum likelihood esti\-mator~$\widehat{\rho}_{\text{\,\rm ML}}$ is more robust than the Whittle estimator $\widehat{\rho}_{\text{\,\rm Whittle}}$ (the~$(\rho,\psi)$ values such that~$r\left(\rho,\psi\right)>1$) and the range in which the Whittle estimator $\widehat{\rho}_{\text{\,\rm Whittle}}$ is more robust than the maximum likelihood estimator $\widehat{\rho}_{\text{\,\rm ML}}$ (i.e., $r\left(\rho,\psi\right)<1$) are colored in green and yellow, respectively. The top-down view (from the direction of ``the North Pole'') of the graph of $r\left(\rho,\psi\right)$ is provided in Figure~\ref{fig:120} (right-hand panel).  
\end{example}


\begin{funding}
 The first author was supported by JSPS KAKENHI Grant-in-Aid for Scientific Research (C) 23k11004 and Nanzan University Pache Research Subsidy I-A-2 for the 2026 academic year (J. Hirukawa, Nanzan Univ.)  The second author was supported by JSPS KAKENHI Grant-in-Aid for Scientific Research (C) 26K14735 (M. Taniguchi, Waseda Univ.) The third author acknowledges support from  the Czech Science Foundation grant GA24-10078S and the Fonds Thelam of the Fondation Roi Baudouin.
\end{funding}



%
\bibliographystyle{imsart-nameyear.bst}
\bibliography{Convolution.bib}      


\begin{appendix}
\section*{}

\begin{proof}[Proof of Lemma \ref{lem:3-2}] 

Taking (\ref{eq:a2-2}) into account, we obtain
\begin{align*}
\exp{\left(it\, S_{\theta,T} +\Lambda_T\right)}=&\exp{\left\{it\, a_1W_{1,T}\left(\theta\right)+h\Gamma^{1/2}\left(\theta\right)W_{1,T}\left(\theta\right)-{h^2}\Gamma\left(\theta\right)/2\right\}}\\
&\  \ \ \ \ \ \ \ \  \times \left[1+c_T^{-1}q_{\theta ,T}\left(W_{1,T}\left(\theta\right),W_{2,T}\left(\theta\right)\right)\right]+O_{\rm P}\left(c_T^{-2}\right)
\end{align*}
where (with $Q_{\theta,T}$ as  in the expansion of $S_{\theta,T}$ for  $\{\widehat{\theta}_T\vert\, T\in{\mathbb N}\} \in \mathcal{D}_\theta$, see \eqref{eq:3-10}) 
\begin{align*}
q_{\theta ,T}\!\left(w_1,w_2\right)\!\coloneqq\! it\, Q_{\theta ,T}\left(w_1,w_2\right) 
&
+\frac{h^2}{2}\left\{J\left(\theta\right)\Gamma^{-1/2}\left(\theta\right)w_1+\sigma_2w_2\right\}
\! -\! \frac{h^3}{6}\left\{3J\left(\theta\right)+K\left(\theta\right)\right\} . 
\end{align*}

In view of Assumption \ref{assum:B1-2}, we can evaluate the asymptotic third-order cumulants\linebreak of~$\boldsymbol{W}_T\left(\theta\right)=\left(W_{1,T}\left(\theta\right),W_{2,T}\left(\theta\right)\right)^{\top}$: namely, 
\begin{align*}
{\rm cum}_{\theta}\left(W_{i,T}\left(\theta\right),W_{j,T}\left(\theta\right),W_{k,T}\left(\theta\right)\right)\coloneqq c_T^{-1}C_{ijk}+O\left(c_T^{-2}\right)\quad i,j,k=1,2. 
\end{align*}
In particular $C_{111}=K\left(\theta\right)\Gamma^{-3/2}\left(\theta\right)$. We also obtain the second-order Edgeworth expansion 
\begin{align*}
&f_{\boldsymbol{W}_T}\left(w_1,w_2\right)=\phi\left(w_1\right)\phi\left(w_2\right)\left\{1+c_T^{-1}r\left(w_1,w_2\right)\right\}+O\left(c_T^{-2}\right)\quad\text{as $T\to\infty$}
\end{align*}
of the density $f_{\boldsymbol{W}_T}$ of $\boldsymbol{W}_T$,  
where $\phi\left(w\right)\coloneqq \left(2\pi\right)^{-1/2}
\exp{\left\{-{w^2}/{2}\right\}}$ and
\begin{align*}
r\left(w_1,w_2\right)\coloneqq &\frac{1}{6}\left\{C_{111}\left(w_1^3-3w_1\right)+3\,C_{112}w_2\left(w_1^2-1\right)\right.\\
&\left. \qquad\qquad\qquad  +\,3\,C_{122}w_1\left(w_2^2-1\right)+C_{222}\left(w_2^3-3w_2\right)\right\}.
\end{align*}

We have 
\begin{align*}
\psi_{S_{\theta,T}}\left(t,h\right)&=\int^{\infty}_{-\infty}\int^{\infty}_{-\infty}\phi\left(w_1\right)\phi\left(w_2\right)\exp{\left\{it\, a_1w_{1}+h\Gamma^{1/2}\left(\theta\right)w_{1}-\frac{h^2}{2}\Gamma\left(\theta\right)\right\}}\\
&\qquad\qquad\qquad\times\left(1+c_T^{-1}q_{\theta ,T}\left(w_{1},w_{2}\right)+c_T^{-1}r\left(w_1,w_2\right)\right){\rm d}w_1{\rm d}w_2+O\left(c_T^{-2}\right)\\
%
&=\int^{\infty}_{-\infty}\phi\left(w_1\right)\exp{\left\{it\,a_1w_{1}+h\Gamma^{1/2}\left(\theta\right)w_{1}-\frac{h^2}{2}\Gamma\left(\theta\right)\right\}}\\
&\qquad\qquad\qquad\times \left(1+c_T^{-1}\widetilde{q}_{\theta,T}\left(w_{1}\right)+c_T^{-1}\widetilde{r}\left(w_1\right)\right){\rm d}w_1+O\left(c_T^{-2}\right)\\
&=\exp{\Big\{\frac{\left(it\right)^2a_1^2}{2}+it\,a_1\Gamma^{1/2}\left(\theta\right)h\Big\}}\int^{\infty}_{-\infty}\left(2\pi\right)^{-1/2}\exp{\left\{-{\left(w_1-\mu\right)^{2}}/{2}\right\}}\\
&\qquad\qquad\qquad\qquad\times\left(1+c_T^{-1}\widetilde{q}_{\theta,T}\left(w_{1}\right)+c_T^{-1}\widetilde{r}\left(w_1\right)\right){\rm d}w_1+O\left(c_T^{-2}\right)
\end{align*}
where
\begin{align}\label{rtilde}
&\widetilde{r}\left(w_1\right)\coloneqq \int^{\infty}_{-\infty}\!\!\phi\left(w_2\right)r\left(w_1,w_2\right){\rm d}w_2,\quad\widetilde{q}_{\theta,T}\left(w_1\right)
\coloneqq \int^{\infty}_{-\infty}\!\!\phi\left(w_2\right)q\left(w_1,w_2\right){\rm d}w_2,
\end{align} 
and $\mu\coloneqq it\,a_1+h\Gamma^{1/2}\left(\theta\right)$. 
Since
\begin{align*}
\int^{\infty}_{-\infty}&\left(2\pi\right)^{-1/2}\exp \left\{-{\left(w_1-\mu\right)^{2}}/{2}\right\}\widetilde{q}_{\theta,T}\left(w_{1}\right){\rm d}w_1\\
&\quad\quad =it \int^{\infty}_{-\infty}\int^{\infty}_{-\infty}\left(2\pi\right)^{-1/2}\exp{\left\{- {\left(w_1-\mu\right)^{2}}/{2}\right\}}\phi\left(w_2\right)Q_{\theta,T}\left(w_1,w_2\right){\rm d}w_1{\rm d}w_2\\
&\qquad\qquad\qquad\quad +\frac{h^2J\left(\theta\right)\Gamma^{-1/2}\left(\theta\right)}{2}\mu-\frac{h^3}{6}\left\{3J\left(\theta\right)+K\left(\theta\right)\right\}\\
&\quad\quad = it \int^{\infty}_{-\infty}\left(2\pi\right)^{-1/2}\exp{\left\{- {\left(w_1-\mu\right)^{2}}/{2}\right\}}\overline{Q}_{\theta,T}\left(w_1\right){\rm d}w_1\\
&\qquad\qquad\qquad\quad +it\,a_1\frac{h^2J\left(\theta\right)\Gamma^{-1/2}\left(\theta\right)}{2}-\frac{h^3K\left(\theta\right)}{6}
\eqqcolon it\,a_2\left(\mu\right)+b_1\vspace{-3mm}
\end{align*}
with
\begin{align*}
a_2\left(\mu\right)&\coloneqq \int^{\infty}_{-\infty}\left(2\pi\right)^{-1/2}\exp{\left\{- {\left(w_1-\mu\right)^{2}}/{2}\right\}}\overline{Q}_{\theta,T}\left(w_1\right){\rm d}w_1\\
&=\int^{\infty}_{-\infty}\phi\left(w_1\right)\overline{Q}_{\theta,T}\left(w_1+\mu\right){\rm d}w_1={\rm E}_{{\theta,T}}\left\{Q_{\theta,T}\left(W_{1,T}\left(\theta\right)+\mu,W_{2,T}\left(\theta\right)\right)\right\}+o\left(1\right),
\end{align*}
$$\overline{Q}_{\theta,T}\left(w_1\right) \coloneqq \int^{\infty}_{-\infty}\phi\left(w_2\right)Q_{\theta,T}\left(w_1,w_2\right){\rm d}w_2,\ \  \text{and}\ \  b_1\coloneqq it\,a_1\frac{h^2J\left(\theta\right)\Gamma^{-1/2}\left(\theta\right)}{2}-\frac{h^3K\left(\theta\right)}{6}, 
$$
letting  
\begin{align*}
\int^{\infty}_{-\infty}\left(2\pi\right)^{-1/2}\exp{\left\{-{\left(w_1-\mu\right)^{2}}/{2}\right\}}\widetilde{r}\left(w_{1}\right){\rm d}w_1
&=\frac{K\left(\theta\right)\Gamma^{-3/2}\left(\theta\right)}{6}\mu^3
\eqqcolon  b_2
\end{align*}
with $\tilde r$ defined in \eqref{rtilde} and\vspace{-1mm} 
\[
b_3\coloneqq b_1 + b_2 = it\,a_1\frac{h^2J\left(\theta\right)\Gamma^{-1/2}\left(\theta\right)}{2}-\frac{h^3K\left(\theta\right)}{6}+\frac{K\left(\theta\right)\Gamma^{-3/2}\left(\theta\right)}{6}\mu^3 ,
\]
 it follows that\vspace{-1mm} 
\begin{align*}
\psi_{S_{\theta,T}}\left(t,h\right)=\exp{\left\{\frac{\left(it\right)^2a_1^2}{2}+it\,a_1\Gamma^{1/2}\left(\theta\right)h\right\}}\left[1+c_T^{-1}\left\{it\,a_2\left(\mu\right)+b_3\right\}\right]+O\left(c_T^{-2}\right)
\end{align*}
while\vspace{-3mm} 
\begin{align*}
\psi_{S_{\theta_T, T}}\left(t,h\right)&=\exp {(-it\, h)}\psi_{S_{\theta,t}}\left(t,h\right)\\ 
&=\exp{\left\{\frac{\left(it\right)^2a_1^2}{2}+it\left\{a_1\Gamma^{1/2}\left(\theta\right)-1\right\}h\right\}}\\
&\qquad\times \left[1+c_T^{-1}\left\{it\,a_2\left(\mu\right)
 +it\,a_1\frac{h^2J\left(\theta\right)\Gamma^{-1/2}\left(\theta\right)}{2}-\frac{h^3K\left(\theta\right)}{6}\right.\right.  \\ 
 &\qquad  \quad\quad\qquad \ 
\left.\left. +\frac{K\left(\theta\right)\Gamma^{-3/2}\left(\theta\right)}{6}\mu^3\right\}\right]+O\left(c_T^{-2}\right)\quad\text{as $T\to\infty$.}
\end{align*}\vspace{-11.5mm}

\end{proof}
\end{appendix}

\end{document}